\numberwithin{equation}{section}
\newtheorem{theorem}{Theorem}[section]
\newtheorem{lemma}[theorem]{Lemma}
\newtheorem{proposition}[theorem]{Proposition}
\newtheorem{claim}[theorem]{Claim}
\newtheorem{definition}[theorem]{Definition}
\newtheorem{remark}[theorem]{Remark}
\newcommand{\e}{\epsilon}
\newcommand{\N}{{\mathbb N}}
\newcommand{\Z}{{\mathbb Z}}
\newcommand{\T}{{\mathcal T}}
\newcommand{\cC}{\mathcal{C}}
\newcommand{\teichmuller}{Teichm{\"u}ller{ }}
\newcommand{\Teich}{{\mathcal T}}
\newcommand{\PMF}{\text{PMF}}
\newcommand{\UE}{\text{UE}}
\newcommand{\Mod}{\text{Mod}}
\renewcommand{\le}{\leqslant}
\renewcommand{\ge}{\geqslant}
\newcommand{\rnu}{\check \nu}
 \let\c@theorem=\c@subsection
 \let\c@conjecture=\c@subsection
 \let\c@lemma=\c@subsection
 \let\c@proposition=\c@subsection
 \let\c@claim=\c@subsection
 \let\c@question=\c@subsection
 \let\c@criterion=\c@subsection
 \let\c@vfconj=\c@subsection
 \let\c@definition=\c@subsection
 \let\c@notation=\c@subsection
 \let\c@remark=\c@subsection
 \let\c@example=\c@subsection
 \let\c@equation=\c@subsection
 \let\c@figure=\c@subsection
 \let\c@wrapfigure=\c@subsection
\begin{document}
\title{The stratum of random mapping classes.}
\author[Gadre]{Vaibhav Gadre}
\address{\hskip-\parindent
    	School of Mathematics and Statistics\\
        University of Glasgow\\
        15 University Gardens\\
        Glasgow G12 8QW UK}
\email{Vaibhav.Gadre@glasgow.ac.uk}
\thanks{The first author acknowledges support from the GEAR Network
  (U.S. National Science Foundation grants DMS 1107452, 1107263,
  1107367 ``RNMS: GEometric structures And Representation varieties'').}

\author[Maher]{Joseph Maher}
\address{\hskip-\parindent
  Department of Mathematics, College of Staten Island, CUNY \\
  2800 Victory Boulevard, Staten Island, NY 10314, USA \\
  and Department of Mathematics, 4307 Graduate Center, CUNY \\
  365 5th Avenue, New York, NY 10016, USA}
\email{joseph.maher@csi.cuny.edu }
\thanks{The second author acknowledges support from the Simons
  Foundation and PSC-CUNY}
 
\keywords{\teichmuller theory, Moduli of Riemann surfaces.}
\subjclass[2010]{30F60, 32G15}

\begin{abstract}
We consider random walks on the mapping class group whose support
generates a non-elementary subgroup and contains a pseudo-Anosov map
whose invariant \teichmuller geodesic is in the principal stratum. For
such random walks, we show that mapping classes along almost every
infinite sample path are eventually pseudo-Anosov, with invariant
\teichmuller geodesics in the principal stratum. This provides an
answer to a question of Kapovich and Pfaff \cite{Kap-Pfa}.
\end{abstract}

\maketitle

\section{Introduction}

Let $S$ be an orientable surface of finite type. Let $\Mod(S)$ denote
the mapping class group consisting of orientation preserving
diffeomorphisms on $S$ modulo isotopy. The \teichmuller space
$\Teich(S)$ is the space of marked conformal structures on $S$ and the
mapping class group $\Mod(S)$ acts on $\Teich(S)$ by changing the
marking. This action is properly discontinuous, and the quotient
$\mathcal{M}(S)$ is the moduli space of Riemann surfaces. The unit
tangent space of $\Teich(S)$ may be identified with the space of unit
area quadratic differentials $Q(S)$, with simple poles at the
punctures of $S$. The space $Q(S)$ is stratified by sets consisting of
quadratic differentials with a given list of multiplicities for their
zeroes. The \emph{principal stratum} consists of those quadratic
differentials all of whose zeros are simple, i.e. have multiplicity
one; this is the top dimensional stratum in $Q(S)$.  Maher \cite{Mah}
and Rivin \cite{Riv} showed that a random walk on $\Mod(S)$ gives a
pseudo-Anosov mapping class with a probability that tends to $1$ as
the length of the sample path tends to infinity. A pseudo-Anosov
element preserves an invariant geodesic in $\Teich(S)$, which is
contained in a single stratum. As a refinement, Kapovich and Pfaff
raise the following question: what is the stratum of quadratic
differentials for the invariant \teichmuller geodesic of a random
pseudo-Anosov element? See \cite[Question 1.5]{Kap-Pfa} and
\cite[Question 6.1]{Del-Hof-Man}.

As a step towards answering the question, we prove the following
result. We shall write $d_{\Mod}$ for the word metric on
$\Mod(S)$ with respect to a choice of finite generating set.

\begin{theorem}\label{main}
Let $\mu$ be a probability distribution on $\Mod(S)$ such that
\begin{enumerate}
\item $\mu$ has finite first moment with respect to $d_{\Mod}$,
\item $\text{Supp}(\mu)$ generates a non-elementary subgroup $H$ of $\Mod(S)$, and
\item The semigroup generated by $\text{Supp}(\mu)$ contains a pseudo-Anosov $g$ such that the invariant \teichmuller geodesic $\gamma_g$ for $g$ lies in the principal stratum of quadratic differentials.
\end{enumerate}
Then, for almost every infinite sample path $\omega = (w_n)$, there is
positive integer $N$ such that $w_n$ is a pseudo-Anosov map in the
principal stratum for all $n \geqslant N$. Furthermore, almost every
bi-infinite sample path determines a unique \teichmuller geodesic
$\gamma_\omega$ with the same limit points, and this geodesic also
lies in the principal stratum.
\end{theorem}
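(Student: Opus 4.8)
The plan is to reduce the entire theorem to a single $0$--$1$ statement about the forward hitting measure, built on the observation that \emph{the stratum of a pseudo-Anosov map is a mapping-class-group-invariant function of its attracting foliation alone, and depends on it upper-semicontinuously.} If $\phi$ is pseudo-Anosov, its unstable foliation $F^u_\phi$ is minimal and filling, hence has no saddle connections, so the cyclic list of prong numbers at its singularities is intrinsic; this list records exactly the orders of the zeros of the $\phi$-invariant quadratic differential, so the stratum $\mathrm{str}(\gamma_\phi)$ is determined by $F^u_\phi$. Moreover, fixing a basepoint $X \in \Teich(S)$, the Hubbard--Masur map carrying a measured foliation to the area-one quadratic differential on $X$ with that horizontal foliation is continuous into $Q(S)$, and the principal stratum is open in $Q(S)$; consequently, if $\lambda^+_{\phi_n} \to \lambda^+_\phi$ in $\PMF$ with all of these minimal and filling and $\gamma_\phi$ in the principal stratum, then $\gamma_{\phi_n}$ is in the principal stratum for all large $n$. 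Granting this and the standard limit theory for random walks on $\Mod(S)$ --- Maher \cite{Mah} and its almost-sure strengthenings, together with Kaimanovich--Masur --- the theorem follows from the claim that the forward limit $\omega^+$ of almost every sample path lies in the principal stratum. Indeed, almost surely $w_n$ is pseudo-Anosov for all large $n$ with $\lambda^+_{w_n} \to \omega^+$ in $\PMF$, so the propagation statement above gives the first assertion; and along a bi-infinite path the backward limit $\omega^-$ is almost surely uniquely ergodic and distinct from $\omega^+$, so there is a unique Teichm\"uller geodesic $\gamma_\omega$ from $\omega^-$ to $\omega^+$ (uniqueness because $\omega^\pm$ are uniquely ergodic), whose horizontal foliation is $\omega^+$, hence also in the principal stratum, yielding the ``furthermore''.

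Now set $\nu$ to be the forward hitting measure on $\PMF$ and $E = \{ F \in \PMF : F \text{ is minimal and filling, and } \mathrm{str}(F) \text{ is the principal stratum}\}$. Since $\nu$ is supported on minimal filling foliations (Kaimanovich--Masur), the theorem reduces to $\nu(E) = 1$. The set $E$ is Borel, and it is invariant under $H$ because mapping classes preserve strata, minimality, and filling. On the path space with its Bernoulli measure $\mathbb P$ and shift $T$, the forward limit satisfies $\omega^+(T\omega) = g_1^{-1} \cdot \omega^+(\omega)$, where $g_1$ is the first increment; as $g_1 \in H$ and $E$ is $H$-invariant, the set $\{ \omega : \omega^+(\omega) \in E \}$ is $T$-invariant, and $T$ is an ergodic (Bernoulli) shift, so $\nu(E) = \mathbb P(\omega^+ \in E) \in \{0,1\}$.

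It remains to prove $\nu(E) > 0$, and this is the only place hypothesis (3) is used. Write $g = h_1 \cdots h_k$ with each $h_i \in \text{Supp}(\mu)$, which is possible because $g$ lies in the semigroup generated by $\text{Supp}(\mu)$. For $N \ge 1$ let $B_N$ be the event that the first $Nk$ increments of the sample path spell $g^N$; then $\mathbb P(B_N) > 0$ (if $\mu$ is non-atomic, replace each $h_i$ by a positive-measure neighbourhood, all estimates below being stable). On $B_N$ we have $\omega^+ = g^N \cdot (\omega')^+$, where $(\omega')^+$ is the forward limit of the shifted walk $(g_{Nk+1}, g_{Nk+2}, \dots)$, a fresh $\mu$-walk independent of $B_N$; its law $\nu$ is non-atomic, so almost surely $(\omega')^+ \ne \lambda^-_g$, and then the North--South dynamics of $g$ on $\PMF$ give $g^N \cdot (\omega')^+ \to \lambda^+_g$ as $N \to \infty$. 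By hypothesis (3), $\mathrm{str}(\lambda^+_g) = \mathrm{str}(\gamma_g)$ is the principal stratum, so $\lambda^+_g$ has an open neighbourhood $U$ (inside the Hubbard--Masur preimage of the principal stratum) every minimal filling point of which lies in $E$. Choosing $N$ so large that $\mathbb P(g^N \cdot (\omega')^+ \in U) \ge \tfrac12$, independence of $B_N$ from $(\omega')^+$ and the fact that $\nu$ is carried by minimal filling foliations give $\nu(E) \ge \mathbb P(\omega^+ \in U) \ge \mathbb P(B_N)\, \mathbb P(g^N \cdot (\omega')^+ \in U) \ge \tfrac12\, \mathbb P(B_N) > 0$. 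With the $0$--$1$ law this forces $\nu(E) = 1$, completing the proof.

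The probabilistic part is soft: once the problem is phrased as ``an $H$-invariant Borel subset of $\PMF$ with positive hitting measure must have full measure'', hypothesis (3) supplies positivity through the $g^N$ estimate and the ergodic Bernoulli shift supplies the dichotomy. The step I expect to require real care is the geometric reduction at the outset --- checking that $\mathrm{str}$ is genuinely a well-defined, $\Mod(S)$-invariant, upper-semicontinuous function of the attracting foliation, and that the limit theory really delivers convergence $\lambda^+_{w_n} \to \omega^+$ in the $\PMF$ topology, which is the topology fine enough to detect the stratum. Everything hinges on that being set up cleanly.
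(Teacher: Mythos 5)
Your approach is genuinely different from the paper's, and I believe it is essentially correct. The paper's strategy is quantitative and geometric: it shows (Lemma \ref{closeness}, Proposition \ref{prop:principal}) that any Teichm\"uller geodesic fellow-travelling a fixed principal-stratum axis $\gamma_g$ for a long enough distance must itself lie in the principal stratum, then combines ergodicity of the shift with the full strength of Dahmani--Horbez (linear-length fellow-travelling of $\gamma_{w_n}$ with the tracked geodesic $\gamma_\omega$ in $\Teich(S)$) to push this to $\gamma_{w_n}$. You instead reformulate the theorem as a $0$--$1$ statement about the forward hitting measure: the stratum is a well-defined $\Mod(S)$-invariant function of a minimal filling foliation, and the set $E$ of minimal filling foliations whose stratum is principal equals $\{\text{minimal filling}\}\cap h^{-1}(\text{principal stratum})$ for the projectivized Hubbard--Masur homeomorphism $h:\PMF\to PQ_X(S)$; since the principal stratum is open in $PQ_X(S)$ and $h$ is continuous, the relevant set is (open $\cap$ Borel), and $T$-invariance of $\{\omega^+\in E\}$ plus Bernoulli ergodicity gives the dichotomy $\nu(E)\in\{0,1\}$. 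Positivity then comes from hypothesis (3) via North--South dynamics of $g$ on $\PMF$, much as the paper's Lemma \ref{lemma:positive} uses shadows. What your route buys is the elimination of Rafi-style fellow-travelling estimates in $\Teich(S)$ entirely; what it costs is the need for two inputs you correctly flag as delicate. First, the reduction genuinely requires $\lambda^+_{w_n}\to\omega^+$ in $\PMF$ almost surely, which does not follow from Maher \cite{Mah} or Kaimanovich--Masur alone; it does follow from Dahmani--Horbez (the $T_1\ge(1-e)\ell n\to\infty$ part of Proposition \ref{Dahmani-Horbez} pushes the forward fellow-travelling to infinity, giving convergence of attracting fixed points in $\partial\cC(S)$, and then Klarreich transfers this to $\PMF$ on $\UE$), so you still need Dahmani--Horbez, just a weaker consequence of it. Second, you need the fact that for minimal filling $F$ the zero structure of $q_X(F)$ is intrinsic and independent of $X$: this rests on the observation that minimality forbids saddle connections hence Whitehead moves, so the singularity data is well-defined, and that the zeros of a quadratic differential and the prong structure of its horizontal foliation determine each other. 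Both points are true but should be spelled out; the paper avoids them by staying at the level of Teichm\"uller geodesics and using convergence of geodesics on compact sets (end of the proof of Proposition \ref{prop:principal}), so the two arguments trade geometry in $\Teich(S)$ for topology of $\PMF$ and $Q_X(S)$.
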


We will refer to condition $(3)$ above as the \emph{principal stratum
  assumption}.

The proof follows the following strategy. Let $g$ be a pseudo-Anosov
element whose invariant \teichmuller geodesic $\gamma_g$ lies in the
principal stratum. We show that any \teichmuller geodesic that fellow
travels $\gamma_g$ for a sufficiently large distance $D$, depending on
$g$, also lies in the principal stratum. Next, we show that if $g$
lies in the semigroup generated by the support of $\mu$, there is a
positive probability that the geodesic $\gamma_\omega$ tracked by a
sample path $\omega$, fellow travels the invariant geodesic $\gamma_g$
for distance at least $D$.  Ergodicity of the shift map on
$\Mod(S)^{\mathbb Z}$ then implies that a positive proportion of
subsegments of $\gamma_\omega$ of length $D$ fellow travel some
translate of $\gamma_g$.  We then use work of Dahmani and Horbez
\cite{Dah-Hor} which shows that for almost all sample paths $\omega$,
for sufficiently large $n$, all elements $w_n$ are pseudo-Anosov, with
invariant geodesics $\gamma_{w_n}$ which fellow travel $\gamma_\omega$
for a distance which grows linearly in $n$. In particular, this
implies that $\gamma_{w_n}$ fellow travels a sufficiently long
subsegment of a translate of $\gamma_g$, and so lies in the principal
stratum.

\section*{\teichmuller preliminaries}

Let $S$ be an orientable surface of finite type.  For the sporadic
examples in which the Euler characteristic of $S$ is zero, namely the
torus and the 4-punctured sphere, there is a single stratum of
quadratic differentials in each case, so we will assume that the Euler
characteristic of $S$ is negative.

The \teichmuller metric is given by
\[ d_\Teich(X, Y) = \tfrac{1}{2} \inf_{f} \log K(f), \]
where the infimum is taken over all quasiconformal maps $f \colon X
\to Y$ in the given homotopy class, and $K(f)$ is the quasiconformal
constant of $f$.  As there is a unique \teichmuller geodesic
connecting any pair of points in \teichmuller space, we may sometimes
write $[X, Y]$ to denote the \teichmuller geodesic segment from $X$ to
$Y$. For detailed background about the \teichmuller metric and the
geometry of quadratic differentials, see for example \cite{Wri}.

The complex of curves $\cC(S)$ is an infinite graph with vertices
isotopy classes of simple closed curves on $S$. Two vertices
$[\alpha], [\beta]$ are separated by an edge if the curves $\alpha$
and $\beta$ can be isotoped to be disjoint. The graph $\cC(S)$ is
locally infinite and has infinite diameter, and Masur and Minsky
showed that $\cC(S)$ is $\delta$-hyperbolic in the sense of Gromov
\cite{Mas-Min1}.

By the uniformization theorem, a conformal class $X$ determines a a
unique hyperbolic metric on $S$, which we shall also denote by $X$.
For a hyperbolic surface $X$, a systole of $X$ is a simple closed
curve that has the shortest length in the hyperbolic metric. The set
of systoles of $X$ is a finite set whose diameter in $\cC(S)$ is
bounded above by a constant that depends only on the topology of
$S$. Thus, the systole defines a coarse projection map $\pi: \Teich(S)
\to \cC(S)$. For notational simplicity, we will use upper case letters
for points $X$ in $\Teich(S)$, and the corresponding lower case
letters $x = \pi(X)$ for their projections to the curve complex. Masur
and Minsky \cite[6.1]{Mas-Min1} showed that $\pi$ is coarsely
Lipschitz, i.e. there are constants $M_1> 0, A_1 > 0$ that depend only
on $S$, such that for any pair of points $X, Y \in \Teich(S)$
\begin{equation}\label{eq:reducing}
d_\cC(x, y) < M_1 d_{\Teich}(X,Y) + A_1.
\end{equation}
Moreover, Masur and Minsky also show that \teichmuller geodesics
$\gamma$ project to uniformly unparameterised quasigeodesics in
$\cC(S)$. Let $(M_2,A_2)$-be the quasigeodesicity constants for the
projection of a \teichmuller geodesic, and these constants depend only
on $S$.

The set of hyperbolic surfaces $X \in \Teich(S)$ for which the length
of the systole is less than $\epsilon$ form the $\epsilon$-thin part
$\Teich(S)_\epsilon$ of \teichmuller space. The complement $K_\epsilon
= \Teich(S) \setminus \Teich(S)_\epsilon$ is called the thick part. By
Mumford compactness, $\Mod(S) \backslash K_\epsilon$ is compact, and
furthermore a metric regular neighbourhood of the thick part is
contained in a larger thick part. More precisely, for any $\e > 0$,
and any $D \ge 0$, there is a constant $\e'$, depending on $\e, D$ and
the surface $S$, such that a metric $D$-neighbourhood of $K_\e$, in
the \teichmuller metric, is contained in $K_{\e'}$.

Let $\gamma$ and $\gamma'$ be two geodesics in a metric space $(M,
d)$. If there are are choices of (not necessarily unit speed)
parameterizations $\gamma(t)$ and $\gamma'(t)$ such that there is a
constant $E$ with $d(\gamma(t), \gamma'(t)) \le E$ for all $t$, then
we say that $\gamma$ and $\gamma'$ are fellow travellers with fellow
travelling constant $E$, or $E$-fellow travel. If $d(\gamma(t),
\gamma'(t)) \le E$, for all $t$, for the unit speed parameterizations
of $\gamma$ and $\gamma'$, then we say that $\gamma$ and $\gamma'$ are
parameterized $E$-fellow travellers.

Let $\gamma$ and $\gamma'$ be two \teichmuller geodesics whose
projections to the curve complex $\pi(\gamma)$ and $\pi(\gamma')$
fellow travel.  In general, this does not imply that the original
\teichmuller geodesics fellow travel in \teichmuller space. However,
we now show in the following lemma that if $\gamma$ is contained in a
thick part $K_\e$, and $\pi(\gamma')$ fellow travels $\pi(\gamma)$ for
a sufficiently long distance in $\cC(S)$, then $\gamma'$ contains a
point that is close to $\gamma$ in \teichmuller space.

\begin{lemma}\label{closeness}
For any constants $\e > 0$ and $E \ge 0$, there are constants $L > 0$
and $ F > 0$, depending on $\e, E$ and the surface $S$, such that if
$\gamma = [X, Y]$ is a \teichmuller geodesic segment contained in the thick
part $K_\epsilon$, of length at least $L$, and $\gamma' = [X', Y']$ is
a \teichmuller geodesic segment, whose endpoints $x', y'$ in $\cC(S)$ are distance at
most $E$ from the endpoints $x, y$ of $\pi(\gamma)$,
i.e. $d_{\cC(S)}(x, x') \le E$ and $d_{\cC(S)}(y, y') \le E$, then
there is a point $Z$ on $\gamma'$ such that $d_{\Teich}(Z,
\gamma) \leqslant F$.
\end{lemma}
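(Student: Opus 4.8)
The plan is to pass to the curve complex, obtain fellow travelling there from hyperbolicity, and then use the thickness of $\gamma$ to promote this to a statement in the Teichm\"uller metric.

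\emph{Step 1: fellow travelling in $\cC(S)$.} Because $\gamma = [X,Y]$ lies in the thick part $K_\epsilon$, it makes definite progress in the curve complex: there are constants $M_3,A_3$ depending only on $\epsilon$ and $S$ with $d_{\cC(S)}(\pi(\gamma(s)),\pi(\gamma(t))) \ge |s-t|/M_3 - A_3$, so $\pi(\gamma)$ is a parameterised quasigeodesic (Masur and Minsky; Rafi). Taking $L$ large makes the diameter of $\pi(\gamma)$ in $\cC(S)$ as large as we wish, and its midpoint $w := \pi(W)$, where $W$ is the midpoint of $\gamma$, far from both endpoints $x,y$. Now $\pi(\gamma)$ and $\pi(\gamma')$ are $(M_2,A_2)$-quasigeodesics in the $\delta$-hyperbolic graph $\cC(S)$ whose endpoints are pairwise within $E$; by the Morse lemma on stability of quasigeodesics they lie within Hausdorff distance $R_0 = R_0(\delta,M_2,A_2,E)$ of one another, so $\gamma'$ carries a point $Z$ with $d_{\cC(S)}(\pi(Z),w) \le R_0$, and in fact $\pi(\gamma')$ shadows $\pi(\gamma)$ for a curve complex distance growing linearly in $L$ on either side of $w$.

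\emph{Step 2: promoting to the Teichm\"uller metric.} This is the crux, and thickness is indispensable: curve complex proximity to $\gamma$ does not by itself bound Teichm\"uller distance to $\gamma$, since a geodesic that plunges deep into a thin region has points projecting near $\pi(\gamma)$ but arbitrarily far from $\gamma$. I would argue by contradiction and compactness. If the lemma failed for some $\epsilon,E$, there would be geodesics $\gamma_n = [X_n,Y_n] \subset K_\epsilon$ of length $\ge n$ and geodesics $\gamma'_n = [X'_n,Y'_n]$ with curve complex endpoints within $E$ of those of $\gamma_n$, yet with $d_\Teich(Z,\gamma_n) > n$ for every $Z$ on $\gamma'_n$. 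By Mumford compactness there is a compact $\mathcal{K} \subset \Teich(S)$ whose $\Mod(S)$-translates cover $K_\epsilon$; translating, we may assume the midpoint of $\gamma_n$ lies in $\mathcal{K}$, so $\gamma_n$ subconverges to a bi-infinite geodesic $\gamma_\infty \subset K_\epsilon$. By Masur's criterion a geodesic that stays in the thick part has uniquely ergodic vertical and horizontal foliations, so $\gamma_\infty$ is the unique Teichm\"uller geodesic with its pair of Thurston-boundary endpoints, and (using Klarreich's identification of $\partial\cC(S)$) the unique Teichm\"uller geodesic whose curve complex projection limits to the two endpoints of $\pi(\gamma_\infty)$ in $\partial\cC(S)$. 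On the other hand, applying Step 1 uniformly, $\pi(\gamma'_n)$ shadows $\pi(\gamma_n) \to \pi(\gamma_\infty)$ within $R_0$; one checks that $\gamma'_n$ cannot disappear entirely into the cusps of moduli space (a long Teichm\"uller geodesic whose curve complex projection is a long quasigeodesic must repeatedly return to a fixed thick part), so after a further normalisation $\gamma'_n$ subconverges to a bi-infinite geodesic $\gamma'_\infty$ whose curve complex projection has the same $\partial\cC(S)$-endpoints as $\pi(\gamma_\infty)$. By the uniqueness above, $\gamma'_\infty = \gamma_\infty$; but $d_\Teich(\gamma'_n,\gamma_n) > n \to \infty$ forces $\gamma'_\infty$ and $\gamma_\infty$ to be disjoint, a contradiction. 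Unwinding the compactness produces the constants $L$ and $F$, depending only on $\epsilon$, $E$ and $S$.

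\emph{Main obstacle.} The delicate point throughout is that the endpoints $X',Y'$ of $\gamma'$, although their curve complex projections are $E$-close to $x,y$, may be arbitrarily far from $X,Y$ in the Teichm\"uller metric --- for instance deep inside thin regions over the systoles of $X$ or $Y$, or heavily twisted around curves disjoint from them --- so one cannot control $\gamma'$ by comparing endpoints; it is the interior of $\gamma'$, which tracks the thick middle of $\gamma$, that must be controlled, and ensuring $\gamma'$ does not evade this by descending into thin parts or cusps is where the real work lies. An alternative route giving explicit constants would replace Masur's criterion by Rafi's combinatorial distance formula and the bounded geodesic image theorem: along the thick geodesic $\gamma$ all subsurface and annular projection coefficients are uniformly bounded in terms of $\epsilon$, so Teichm\"uller distance along $\gamma$ is comparable to curve complex distance, and one shows directly that the subsegment of $\gamma'$ shadowing $\gamma$'s thick middle can accumulate no large projection coefficient, hence is itself thick and therefore Teichm\"uller-close to $\gamma$.
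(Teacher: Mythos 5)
Your Step 1 is fine: thick Teichm\"uller geodesics project to parameterised quasigeodesics in $\cC(S)$, and the Morse lemma then gives curve-complex shadowing of $\pi(\gamma')$ along $\pi(\gamma)$. The gap is in Step 2, where the compactness argument does not close. You normalise so that the midpoint of $\gamma_n$ lies in a fixed compact set $\mathcal{K}$, obtaining $\gamma_n\to\gamma_\infty$; but in that \emph{same} normalisation the hypothesis $d_\Teich(Z,\gamma_n) > n$ for all $Z\in\gamma'_n$ forces $\gamma'_n$ to vacate the ball of radius $n$ about $\mathcal{K}$, so $\gamma'_n$ escapes every compact set and has no subsequential limit. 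The ``further normalisation'' you then apply to $\gamma'_n$ necessarily uses mapping classes $h_n\to\infty$, so the resulting limit $\gamma'_\infty=\lim h_n\gamma'_n$ lives in a different frame from $\gamma_\infty$: it is then false that $\pi(\gamma'_\infty)$ has the same $\partial\cC(S)$-endpoints as $\pi(\gamma_\infty)$ (they are the endpoints of $\lim h_n\pi(\gamma_n)$, not of $\pi(\gamma_\infty)$), the identification $\gamma'_\infty=\gamma_\infty$ is unavailable, and the concluding contradiction evaporates. Nor can this be repaired just by noting that $\gamma'_n$ revisits thick parts: re-entering a thick part at a point whose $\cC(S)$-projection is near $\pi(\gamma_n)$ does not bound its Teichm\"uller distance to $\gamma_n$, because even within a fixed thick part the systole map $\pi$ has fibres of infinite Teichm\"uller diameter.

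You flag this yourself as ``where the real work lies,'' and indeed the quantitative control that the compactness argument is missing is exactly what Rafi's thin-triangles theorem \cite[Theorem 8.1]{Raf} supplies; the paper's proof uses it directly, applying it first to the triangle $X,Y,Y'$ and then to $X,X',Y'$, with a subsegment one third of the way along the thick side forcing, via definite $\cC(S)$-progress, that the nearby point produced by the thin-triangles theorem must lie on $[X,Y']$ rather than $[Y,Y']$, and then on $[X',Y']$ rather than $[X,X']$. Your closing sketch via Rafi's distance formula and the bounded geodesic image theorem points in a more promising direction --- it is essentially the route underlying the Dowdall--Duchin--Masur result the paper cites as an alternative source --- but as written it is only a heuristic and is not developed far enough to substitute for the missing step.
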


This result may also be deduced from work of Horbez \cite[Proposition
3.10]{horbez} and Dowdall, Duchin and Masur \cite[Theorem A]{ddm},
extending Rafi \cite{Raf}, but for the convenience of the reader, we
provide a direct proof of this result in Section \ref{section:fellow},
relying only on Rafi \cite{Raf}. In particular, we will make extensive
use of the following fellow travelling result for \teichmuller
geodesics whose endpoints are close together in the thick part.

\begin{theorem}\cite[Theorem 7.1]{Raf}\label{theorem:rafi-fellow}
For any constants $\e > 0$ and $A \ge 0$, there is a constant $B$,
depending only on $\e, A$ and the surface $S$, such that if $[X, Y]$
and $[X', Y']$ are two \teichmuller geodesics, with $X$ and $Y$ in the
$\e$-thick part, and
\[ d_{\Teich}(X, X') \le A \text{ and } d_{\Teich}(Y, Y') \le A,   \]
then $[X, Y]$ and $[X', Y']$ are parameterized $B(\e,
A)$-fellow travellers.
\end{theorem}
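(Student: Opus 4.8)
The plan is to work first in the curve complex, where the hypothesis becomes an easy statement, then to use Rafi's thick--thin decomposition of a Teichm\"uller geodesic to locate a genuinely thick subsegment of $\gamma'$ that shadows $\gamma$, and finally to conclude via Theorem~\ref{theorem:rafi-fellow}. For the first step: because $\gamma$ lies in $K_\e$, its projection $\pi(\gamma)$ is not merely an unparameterised quasigeodesic but a \emph{parameterised} quasigeodesic in $\cC(S)$, with constants depending only on $\e$ and $S$ (the systole of $\gamma(t)$ can remain a fixed curve only for time bounded in terms of $\e$, so $\pi(\gamma)$ makes linear $\cC$-progress). Hence, once $L$ is large, $\pi(\gamma)$ is a long quasigeodesic; $\pi(\gamma')$ is a quasigeodesic whose endpoints lie within $E$ of those of $\pi(\gamma)$; and by $\delta$-hyperbolicity of $\cC(S)$ together with stability of quasigeodesics, $\pi(\gamma)$ and $\pi(\gamma')$ lie within Hausdorff distance $R_0 = R_0(\e,E,S)$ of one another. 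In particular $\pi(\gamma')$ passes within $R_0$ of $\pi(M)$, where $M$ is the midpoint of $\gamma$, at a point whose $\cC$-distance from either endpoint of $\pi(\gamma')$ is as large as we like provided $L$ is large enough.

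The heart of the argument is to promote ``$\pi(\gamma')$ comes $\cC$-close to $\pi(M)$'' to ``$\gamma'$ comes Teichm\"uller-close to $\gamma$''. This is where I expect the main obstacle to lie, because the implication is genuinely false pointwise: a point of $\gamma'$ whose systole is $\cC$-close to $\pi(M)$ can be arbitrarily far from $\gamma$ in the Teichm\"uller metric if $\gamma'$ is deep in a thin part there, for instance if $\gamma'$ begins at a large Dehn power $T_c^N X$ with $c$ a curve near $\pi(M)$. To rule this out I would invoke Rafi's thick--thin decomposition of $\gamma'$: it is a concatenation of ``thick'' subsegments lying in a uniform thick part $K_{\e'}$, with $\e' = \e'(S)$, and ``thin'' subsegments, each the active interval of a single short curve $\alpha$, along which $\pi(\gamma')$ stays within a uniform $\cC$-distance $D_2$ of $\alpha$. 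A thin subsegment abutting an endpoint of $\gamma'$ would force that endpoint's projection to be $\cC$-close to $\pi(M)$, which we have excluded; and since $\pi(\gamma)$ is a parameterised quasigeodesic while $\pi(\gamma')$ Hausdorff-shadows it, each thin subsegment is responsible for only a uniformly bounded amount of $\pi(\gamma)$. Combining this with the facts that at most $\xi(S) = 3g-3+n$ curves are simultaneously short on $\gamma'$ and that a quasigeodesic meets any bounded $\cC$-ball in a set of bounded combinatorial length, one bounds, in terms of $\e, E, S$, the number of thin subsegments of $\gamma'$ that shadow the middle portion of $\pi(\gamma)$. Choosing $L$ large enough that this middle portion is $\cC$-longer than that bound times $D_2$, the thin subsegments cannot cover it, so $\gamma'$ has a thick subsegment $[U,V] \subseteq K_{\e'}$ whose projection passes within a uniform $R_1 = R_1(\e,E,S)$ of $\pi(M)$; working with the middle third of $\gamma$ rather than just its midpoint, we may take $[U,V]$ to be long.

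To finish, note that $[U,V] \subseteq \gamma'$ and a corresponding subsegment $[A,B] \subseteq \gamma$ about $M$ are both Teichm\"uller geodesic segments in a uniform thick part whose $\cC$-projections fellow-travel at distance at most $R_0 + R_1$. Since both are thick, all annular and proper-subsurface coefficients between their respective endpoints are uniformly bounded --- for subsurfaces whose boundary is $\cC$-far from $\pi(M)$ this follows from the bounded geodesic image theorem, and for those near $\pi(M)$ from thickness of the two segments --- while their $\cC(S)$-coefficients agree up to $R_0 + R_1$. Rafi's comparison results then produce a point $X_0 \in [A,B] \subseteq \gamma$ and a point $Z \in [U,V] \subseteq \gamma'$ with $d_{\Teich}(X_0, Z) \le F$ for a constant $F = F(\e,E,S)$; equivalently one first uses these bounded coefficients to locate, on the long thick segment $[U,V]$, a pair of points Teichm\"uller-close to points of $[A,B]$, and then applies Theorem~\ref{theorem:rafi-fellow} to the two thick segments directly. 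This $Z$ is the required point, and the constants $L$ and $F$ depend only on $\e, E$ and $S$. As indicated, the delicate step is the middle one: quantifying how much of $\gamma'$ can be thin in the region where its projection shadows $\pi(M)$, which is precisely the behaviour Rafi's thick--thin theory is built to control.
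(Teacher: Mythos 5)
Your write-up is not a proof of the stated result. Theorem \ref{theorem:rafi-fellow} is Rafi's theorem, which the paper quotes without proof, and it asserts \emph{parameterized} fellow travelling of the entire geodesics $[X,Y]$ and $[X',Y']$ in the \teichmuller metric under the hypothesis that corresponding endpoints are \teichmuller-close (with $X,Y$ thick). What you argue for instead has the hypotheses and conclusion of Lemma \ref{closeness}: you assume $\gamma$ is a thick segment of length at least $L$ whose endpoints are close to those of $\gamma'$ only after projection to $\cC(S)$, and you conclude only that a single point $Z$ of $\gamma'$ is \teichmuller-close to $\gamma$. Moreover, your final step explicitly invokes Theorem \ref{theorem:rafi-fellow}, so as a proof of that theorem the argument is circular. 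No curve-complex shadowing argument could yield the theorem in any case: $\pi$ collapses the thin parts, whereas the theorem controls $d_{\Teich}(\gamma(t),\gamma'(t))$ for all $t$, including times when both geodesics are thin; Rafi's proof works with the flat geometry of the underlying quadratic differentials.

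Read instead as a blind proof of Lemma \ref{closeness} (which the paper proves in Section \ref{section:fellow}), your route is genuinely different from the paper's, which never decomposes $\gamma'$: it applies Rafi's thin-triangles theorem, Theorem \ref{theorem:rafi-thin}, twice, first to the triangle $X Y Y'$ to find points $W_1, W_2$ of $[X,Y']$ near the middle third of $\gamma$, and then to $X X' Y'$ to push a point of $[X',Y']$ onto $[W_1,W_2]$. Your approach, via shadowing in $\cC(S)$ and the thick--thin decomposition, is closer in spirit to the Horbez and Dowdall--Duchin--Masur results cited as alternatives, but it has a gap exactly at the step you flag as delicate: the claim that only boundedly many active intervals of $\gamma'$ shadow the middle portion of $\pi(\gamma)$. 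Neither fact you cite gives this. The bound $\xi(S)$ limits only \emph{simultaneously} short curves, and a bounded ball in $\cC(S)$ contains arbitrarily long chains of pairwise disjoint curves, each of which can carry its own active interval; since an active interval may contribute essentially no progress in $\cC(S)$, bounding the combinatorial length of $\pi(\gamma')$ inside a ball does not bound their number; and the bounded geodesic image theorem is silent precisely for annuli whose cores lie near $[x',y']$, which is where these curves sit. For instance, taking $Y'=\phi Y''$ with $\phi$ a long product of high twist powers about such a chain near $\pi(M)$, one can keep $y'$ near $y$ while forcing arbitrarily many active intervals near $\pi(M)$; the close point guaranteed by the lemma then occurs away from the midpoint, so the thick subsegment you seek need not exist where you look for it. Closing this gap requires control of the subsurface coefficients $d_W(X',Y')$ for $W$ with boundary near $\pi(\gamma)$, which is not among the hypotheses; the paper's double application of Theorem \ref{theorem:rafi-thin} sidesteps the issue entirely.
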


We now continue with the proof of Theorem \ref{main} assuming Lemma
\ref{closeness}. Recall that the Gromov product based at a point $u
\in \cC(S)$ is defined to be
\[
(x,y)_u = \frac{1}{2} \left(d_\cC(u, x) + d_\cC(u, y) - d_\cC(x,y) \right).
\]
Given points $x, y \in \cC(S)$ and a constant $R> 0$, the $R$-shadow
of $y$ is defined to be
\[
S_x(y, R) = \{ z \in \cC(S) \mid (y, z )_x \ge d_\cC(x, y) - R \}.
\]
The definition we use here for shadows follows \cite{MT}, and may
differ slightly from other sources.  The following lemma follows from
the thin triangles property of Gromov hyperbolic spaces, and we give a
proof for the convenience of the reader.

\begin{lemma} \label{shadow-fellow} There is a constant $D$, which
only depends on $\delta$, and a constant $E$, which only depends on
$R$ and $\delta$, such that if $d_\cC(x,y) \ge 2R + D$, then for
any $x' \in S_y(x,R)$ and any $y' \in S_x(y, R)$, any geodesic segment
$[x',y']$ contains a subsegment which $E$-fellow travels $[x,y]$.
\end{lemma}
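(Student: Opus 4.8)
The plan is to reduce the statement to a picture in a $\delta$-hyperbolic space, using only thin triangles and the definition of shadows. Fix the constant $\delta$ from hyperbolicity of $\cC(S)$, and take $D$ to be a constant of the form $D = D(\delta)$ that will be large enough to absorb the finitely many thin-triangle errors that appear below; then choose $E = E(R,\delta)$ afterwards. The key geometric fact I would use repeatedly is that in a $\delta$-hyperbolic space, if $w \in S_u(v,R)$, then $w$ lies within $2R + O(\delta)$ of the geodesic $[u,v]$, and moreover a geodesic $[u,w]$ fellow travels $[u,v]$ up to the point on $[u,v]$ at distance roughly $d_\cC(u,v) - R$ from $u$ (with additive error $O(\delta)$); this is just the statement that the Gromov product $(v,w)_u$ being large forces the initial segments of $[u,v]$ and $[u,w]$ to track one another for length about $(v,w)_u$.

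First I would set up coordinates along $[x,y]$: let $p$ be the point on $[x,y]$ at distance $d_\cC(x,y) - R - c\delta$ from $x$ (for a suitable small constant $c$), and let $q$ be the point at distance $d_\cC(x,y) - R - c\delta$ from $y$. The hypothesis $d_\cC(x,y) \ge 2R + D$, with $D$ chosen larger than $2R$ plus the error terms, guarantees $p$ and $q$ are ordered correctly along $[x,y]$ (i.e. $p$ comes before $q$, leaving a genuine middle subsegment $[p,q]$ of definite positive length). Since $x' \in S_y(x,R)$, the Gromov product $(x,x')_y$ is at least $d_\cC(x,y) - R$, so $x'$ is within $R + O(\delta)$ of $x$ along the $y$-direction — more precisely, any geodesic $[y,x']$ stays $O(\delta)$-close to $[y,x]$ until past the point $q$. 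Symmetrically, $y' \in S_x(y,R)$ forces any geodesic $[x,y']$ to stay $O(\delta)$-close to $[x,y]$ until past the point $p$.

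Now I would consider the geodesic quadrilateral with vertices $x, y, x', y'$, or more directly apply thin triangles to the triangles $(x, y, x')$ and $(x', y, y')$. Triangle $(x,y,x')$ being $\delta$-thin (or $2\delta$-thin for the tripod comparison) implies that the portion of $[x',y]$ near its $y$-end fellow travels $[x,y]$, and in particular the subsegment of $[x,y]$ from $q$ to $y$ is $O(\delta)$-close to $[x',y]$; combined with the shadow bound on $x'$, one extracts that a point $q'$ on $[x',y]$ lies within $E_0(R,\delta)$ of $q$. Applying thin triangles to $(x', y, y')$ then pushes this to the geodesic $[x',y']$: there is a point on $[x',y']$ within $E_1(R,\delta)$ of $q$, and symmetrically a point within $E_1(R,\delta)$ of $p$. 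Since $[x',y']$ is a geodesic and $p,q$ are both $O(R+\delta)$-close to it with $p$ before $q$, the connectedness of geodesics forces the entire subsegment of $[x',y']$ between these two nearby points to $E$-fellow travel $[p,q] \subseteq [x,y]$, for $E = E(R,\delta)$ built from the accumulated constants. That subsegment is the desired one.

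The main obstacle I expect is bookkeeping the additive $\delta$-errors carefully enough to be sure the middle subsegment $[p,q]$ does not degenerate — i.e., pinning down how large $D$ must be (as an explicit function of $\delta$ and the implicit constants in "$w \in S_u(v,R) \Rightarrow w$ near $[u,v]$") so that $p$ genuinely precedes $q$ and the fellow-travelling subsegment has nonnegative length. The geometric content is entirely standard thin-triangles manipulation; the only care needed is that $D$ depends on $\delta$ alone (not on $R$), which works because the condition $d_\cC(x,y) \ge 2R + D$ already supplies the "$2R$" room, so $D$ only has to beat the $\delta$-errors.
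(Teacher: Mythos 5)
Your argument is essentially the same as the paper's: both locate points on $[x,y]$ at roughly distance $R$ from each endpoint using the shadow hypothesis, and then use $\delta$-hyperbolicity to force $[x',y']$ to pass close to those points. The paper packages the two thin-triangle steps as citations to nearest-point-projection lemmas from \cite{MT}; you carry them out by hand, which is fine. One step you should fill in: when you apply thin triangles to the triangle $(x',y,y')$ and assert that the point near $q$ is pushed onto $[x',y']$, you need to rule out that it instead lands near the side $[y,y']$. This does hold --- since $y' \in S_x(y,R)$ gives $(y,y')_x \ge d_\cC(x,y)-R$, the side $[y,y']$ stays at distance at least $d_\cC(x,y)-R-O(\delta)$ from $x$, while $q$ sits at distance about $R$ from $x$, and the hypothesis $d_\cC(x,y) \ge 2R+D$ keeps them apart once $D$ dominates the $\delta$-errors --- but it needs saying, and the symmetric check is needed at $p$. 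Two smaller points: your opening ``key geometric fact,'' that $w \in S_u(v,R)$ lies within $2R+O(\delta)$ of $[u,v]$, is false as stated (a shadow point can be arbitrarily far beyond $v$); fortunately the half you actually use, that $[u,w]$ tracks $[u,v]$ for length about $(v,w)_u$, is the correct statement. And your final conclusion should be fellow-travelling with all of $[x,y]$, not just with the interior piece between $p$ and $q$; this is precisely where the $R$-dependence of $E$ enters, since the subsegment of $[x',y']$ must be paired, under a non-unit-speed parameterization, against the tails of $[x,y]$ of length about $R$.
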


\begin{proof}
We shall write $O(\delta)$ to denote a constant which only
depends (not necessarily linearly) on $\delta$.

Let $p$ be the nearest point projection of $x'$ to $[x, y]$, and let
$q$ be the nearest point projection of $y'$ to $[x, y]$.  The nearest
point projection of the shadow $S_x(y,R)$ is contained in an $(R +
O(\delta))$-neighbourhood of $y$, see for example \cite[Proposition
2.4]{MT}, so $d_\cC(x, p) \le R + O(\delta)$ and $d_\cC(y, q) \le R +
O(\delta)$.
Recall that if $d_\cC(p, q) \ge O(\delta)$ then any geodesic from $x'$
to $y'$ passes within an $O(\delta)$-neighborhood of both $p$ and $q$,
see for example \cite[Proposition 2.3]{MT}. Therefore, if $d(x, y) \ge
2R + O(\delta)$, then this implies that if $p'$ is the closest point
on $[x', y']$ to $p$, and $q'$ is the closest point on $[x', y']$ to
$q$, then $[p', q']$ $E$-fellow travels $[x, y]$, where $E$ is a
constant which only depends on $R$ and $\delta$, as required.
\end{proof}

\begin{remark}
One can replace the geodesic segments $[x,y]$ and $[x',y']$ by
$(M_2,A_2)$-quasigeodesic segments. The constants $D$ and $E$ now
change, and in addition to $R$ and $\delta$, they now depend on the
quasigeodesicity constants.
\end{remark}

We shall write $\PMF$ for the set of projective measured foliations on
the surface $S$, which is Thurston's boundary for \teichmuller space.
A projective measured foliation is uniquely ergodic if the foliation
supports a unique projective measure class.  Let $\UE$ be
the subset of $\PMF$ consisting of uniquely ergodic foliations. We
shall give $\UE$ the corresponding subspace topology. A uniquely
ergodic foliation determines a class of mutually asymptotic geodesic
rays in $\Teich(S)$, as shown by Masur \cite{Mas2}. These rays project to a
class of mutually asymptotic quasigeodesic rays in $\cC(S)$, and so
determines a point in the Gromov boundary of the curve complex. This
boundary map is injective on uniquely ergodic foliations, see for example Hubbard and Masur
\cite{hubbard-masur}. Thus, $\UE$ is also a subset of $\partial
\cC(S)$. Klarriech \cite{klarreich} showed that $\partial \cC(S)$ is
homeomorphic to the quotient of the set of minimal foliations in
$\PMF$ by the equivalence relation which forgets the measure. In
particular, this implies that the two subspace topologies on $\UE$,
induced from inclusions in $\PMF$ and $\partial \cC(S)$, are the same.

Let $\gamma$ be a \teichmuller geodesic in a thick part
$K_\epsilon$. Let $\lambda^+$ and $\lambda^-$ be the projective
classes of vertical and horizontal measured foliations of $\gamma$.
By the work of Kerckhoff, Masur and Smillie \cite[Theorem
3]{Ker-Mas-Smi}, vertical foliations of \teichmuller rays that are
recurrent to a thick part are uniquely ergodic, so the foliations
$\lambda^+$ and $\lambda^-$ are uniquely ergodic, and by Hubbard and
Masur \cite{hubbard-masur} such a pair $(\lambda^-, \lambda^+)$
determines a unique bi-infinite \teichmuller geodesic. Given two
points $X$ and $Y$ in \teichmuller space, and a constant $r \ge 0$,
define $\Gamma_r(X, Y)$ to be the set of all oriented geodesics with
uniquely ergodic vertical and horizontal foliations, which intersect
both $B_r(X)$ and $B_r(Y)$, and furthermore, whose first point of
intersection with either $B_r(X)$ or $B_r(Y)$ lies in $B_r(X)$. A
\teichmuller geodesic with uniquely ergodic vertical foliation
$\lambda^+$ and uniquely ergodic horizontal foliation $\lambda^-$
determines a point $(\lambda^-, \lambda^+)$ in $\UE \times
\UE$. Therefore $\Gamma_r(X, Y)$ determines a subset of $\UE \times
\UE$, which, by abuse of notation, we shall also denote by
$\Gamma_r(X, Y)$.

\begin{proposition}\label{shadows} 
For any \teichmuller geodesic $\gamma$ contained in a thick part
$K_\epsilon$, with vertical foliation $\lambda^+$ and horizontal
foliation $\lambda^-$, there is a constant $r > 0$, depending on
$\epsilon$, such that for any pair of points $X$ and $Y$ on $\gamma$,
the set $\Gamma_r(X, Y)$ contains an open neighbourhood of
$(\lambda^-, \lambda^+)$ in $\UE \times \UE $.
\end{proposition}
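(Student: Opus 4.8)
The plan is to show that the conditions defining membership in $\Gamma_r(X,Y)$ are forced, for a single constant $r$ depending only on $\epsilon$ and $S$, once the pair of endpoints is constrained to a small enough neighbourhood of $(\lambda^-,\lambda^+)$. Since $\lambda^-$ and $\lambda^+$ are the horizontal and vertical foliations of a quadratic differential, they are distinct, so I would first fix disjoint neighbourhoods of $\lambda^-$ and of $\lambda^+$ in $\PMF$; for a pair $(\mu^-,\mu^+)$ drawn from these neighbourhoods inside $\UE$, $\mu^-$ and $\mu^+$ are two distinct uniquely ergodic foliations, which therefore bind and determine a unique bi-infinite \teichmuller geodesic $\gamma_\mu$ with endpoint pair $(\mu^-,\mu^+)$ and uniquely ergodic vertical and horizontal foliations. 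As $\PMF$, and hence $\UE\times\UE$, is metrizable, it suffices to take an arbitrary sequence $(\mu_n^-,\mu_n^+)\to(\lambda^-,\lambda^+)$ in $\UE\times\UE$ and to exhibit a constant $r=r(\epsilon,S)$, independent of $X$ and $Y$, with $\gamma_{\mu_n}\in\Gamma_r(X,Y)$ for all sufficiently large $n$.

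The first step is to pass to the curve complex. Orient $\gamma$ from $\lambda^-$ to $\lambda^+$ and assume, without loss of generality, that $X$ precedes $Y$. By hyperbolicity of $\cC(S)$ and stability of quasigeodesics, there is a constant $E_0$, depending only on $\delta$ and on the quasigeodesicity constants $(M_2,A_2)$ for projected \teichmuller geodesics, such that any two $(M_2,A_2)$-quasigeodesic lines in $\cC(S)$ whose endpoint pairs in $\partial\cC(S)$ are pairwise close $E_0$-fellow travel along subsegments which exhaust both lines as the endpoints converge. By Klarreich's theorem the subspace topologies on $\UE$ induced from $\PMF$ and from $\partial\cC(S)$ coincide, so $\mu_n^\pm\to\lambda^\pm$ in $\partial\cC(S)$; writing $\pi$ for the systole projection, it follows that the $(M_2,A_2)$-quasigeodesics $\pi(\gamma_{\mu_n})$ and $\pi(\gamma)$ $E_0$-fellow travel along subsegments exhausting them as $n\to\infty$.

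The main step is to upgrade this to \teichmuller closeness of $\gamma_{\mu_n}$ to $\gamma$ at two widely separated points, then appeal to Rafi's theorem. Put $E_0'=E_0+O(1)$ to absorb the bounded Hausdorff distance between the systole projection of a \teichmuller geodesic and a genuine quasigeodesic with the same endpoints, and let $L_0=L(\epsilon,E_0')$ and $F_0=F(\epsilon,E_0')$ be the constants of Lemma~\ref{closeness}. Using that $\gamma$ is bi-infinite, choose on $\gamma$, in order, points $P^{--},P^-$ with $d_\Teich(P^{--},P^-)=d_\Teich(P^-,X)=L_0$, and symmetrically points $P^+,P^{++}$ beyond $Y$; the segments $\sigma^-=[P^{--},P^-]$ and $\sigma^+=[P^+,P^{++}]$ have length $L_0$ and lie in $K_\epsilon$, since $\gamma$ does. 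For $n$ large the $E_0$-fellow travelling subsegment of $\pi(\gamma_{\mu_n})$ covers $\pi([P^{--},P^{++}])$, so $\gamma_{\mu_n}$ contains a subsegment whose curve-complex endpoints lie within $E_0'$ of $\pi(P^{--})$ and of $\pi(P^-)$; applying Lemma~\ref{closeness} to this subsegment and the thick segment $\sigma^-$ gives a point $Z_n^-\in\gamma_{\mu_n}$ with $d_\Teich(Z_n^-,W_n^-)\le F_0$ for some $W_n^-\in\sigma^-$, and symmetrically a point $Z_n^+\in\gamma_{\mu_n}$ with $d_\Teich(Z_n^+,W_n^+)\le F_0$ for some $W_n^+\in\sigma^+$. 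Since $W_n^-$ precedes $X$ and $W_n^+$ follows $Y$ on $\gamma$, and $Z_n^-$ precedes $Z_n^+$ on $\gamma_{\mu_n}$ (the order being inherited from that of $\sigma^-$ and $\sigma^+$ via the fellow travelling of projections), the segments $[W_n^-,W_n^+]\subset\gamma$ and $[Z_n^-,Z_n^+]\subset\gamma_{\mu_n}$ are nondegenerate, with $X$ and $Y$ on $[W_n^-,W_n^+]$ in that order. The endpoints $W_n^\pm$ lie in the $\epsilon$-thick part and $d_\Teich(W_n^\pm,Z_n^\pm)\le F_0$, so Theorem~\ref{theorem:rafi-fellow} shows that $[W_n^-,W_n^+]$ and $[Z_n^-,Z_n^+]$ are parameterized $B_0$-fellow travellers, with $B_0=B(\epsilon,F_0)$ depending only on $\epsilon$ and $S$.

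Taking $r=B_0$ (enlarged by a bounded amount, still a function of $\epsilon$ and $S$, if one wants to absorb the mismatch of unit-speed parameterizations of segments of slightly different lengths) completes the argument: the parameterized fellow travelling places a point of $\gamma_{\mu_n}$ within $r$ of $X$ and a point within $r$ of $Y$, occurring in that order along $\gamma_{\mu_n}$, so $\gamma_{\mu_n}$ meets $B_r(X)$ and $B_r(Y)$, and its first intersection with $B_r(X)\cup B_r(Y)$ lies in $B_r(X)$, since the fellow travelling is orientation preserving and $X$ precedes $Y$ on $\gamma$ (when $d_\Teich(X,Y)$ is smaller than a fixed multiple of $r$ the two balls overlap and this last point needs only a short separate remark, which I omit). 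Hence $\gamma_{\mu_n}\in\Gamma_r(X,Y)$ for all large $n$, and as the sequence was arbitrary, $\Gamma_r(X,Y)$ contains a neighbourhood of $(\lambda^-,\lambda^+)$ in $\UE\times\UE$ with $r$ depending only on $\epsilon$ and $S$. I expect the genuine difficulty to be the passage from the curve complex to \teichmuller space: Lemma~\ref{closeness} supplies only one \teichmuller-close point on $\gamma_{\mu_n}$, so the device is to apply it at two far-apart thick windows of $\gamma$ and then use Rafi's theorem to interpolate honest fellow travelling across all of $[X,Y]$; the remaining work is bookkeeping to ensure each of $E_0$, $L_0$, $F_0$, $B_0$ depends only on $\epsilon$ and $S$, with only the neighbourhood of $(\lambda^-,\lambda^+)$ permitted to depend on $X$ and $Y$.
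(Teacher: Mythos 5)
Your argument is correct and reaches the same conclusion, but the opening move differs from the paper's. Where the paper constructs an explicit open neighbourhood out of shadows in $\cC(S)$ — it extends $[X,Y]$ to $[Z_1,Z_2]$, picks $\xi^\pm$ in the limit sets $\overline{S_{z_1}(z_2,R)}\cap\UE$ and $\overline{S_{z_2}(z_1,R)}\cap\UE$, and feeds the resulting curve-complex fellow travelling (from Lemma \ref{shadow-fellow}) into Lemma \ref{closeness} — you instead argue sequentially: you take an arbitrary $(\mu_n^-,\mu_n^+)\to(\lambda^-,\lambda^+)$ in $\UE\times\UE$, invoke Klarreich to transfer convergence to $\partial\cC(S)$, and use thin ideal quadrilaterals/Morse stability to obtain $E_0$-fellow travelling of the projections over subsegments exhausting the lines. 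This avoids both the explicit shadow machinery and the paper's separate case analysis when $d_\Teich(X,Y)$ is small (the paper requires $d_\Teich(X,Y)\ge D'$ and then sets $r'=2r+D'$); your device of placing the two thick windows $\sigma^\pm$ at a fixed \teichmuller distance $L_0$ from $X$ and from $Y$ handles all pairs $X,Y$ uniformly. From there both proofs coincide in the essential technical step: apply Lemma \ref{closeness} at two far-apart thick windows to get a pair of \teichmuller-close points, then interpolate across $[X,Y]$ via Theorem \ref{theorem:rafi-fellow}, ending with $r=B(\epsilon,F_0)$ depending only on $\epsilon$ and $S$ — precisely the uniformity the proposition requires, since only the neighbourhood of $(\lambda^-,\lambda^+)$ may depend on $X$, $Y$.

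Two caveats worth flagging. First, the paper's shadow formulation is not just stylistic: the shadow sets reappear in Lemma \ref{lemma:positive}, where Proposition \ref{positive-measure} gives positive harmonic measure to shadow limit sets, and the explicit open set $\overline{S_{x_0}(g^{-i}x_0,R_0)}\times\overline{S_{x_0}(g^{i}x_0,R_0)}$ is plugged directly into $\Gamma_r(X,Y)$. Your sequential formulation via metrizability of $\UE\times\UE$ is cleaner for this one proposition but would require restoring the shadow description to connect to the measure-theoretic portion of the argument. Second, the step you state rather than prove — that boundary-converging endpoint pairs give $E_0$-fellow travelling of bi-infinite $(M_2,A_2)$-quasigeodesics over growing subsegments — is a genuine consequence of Gromov hyperbolicity via thin ideal triangles applied to a quadrilateral, but since $\cC(S)$ is not proper it does not follow from any compactness argument, and a careful reader would want this spelled out with a citation or short proof.
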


\begin{proof}
As $\Mod(S)$ acts coarsely transitively on the curve complex
$\cC(S)$, there is a constant $R > 0$, depending only on $S$, such
that for all $x$ and $y$ in $\cC(S)$, the limit set of the shadow
$\overline{S_{x}(y, R)}$ contains a non-empty open set in $\partial
\cC(S)$, see for example \cite[Propositions 3.18--19]{MT}.
Given such an $R$, let $D$ and $E$ be the constants in Lemma
\ref{shadow-fellow}, such that if $d(x, y) \ge D$ then for any $x' \in
S_{y}(x, R)$ and $y' \in S_{y}(x, R)$, a geodesic $[x',y']$ has a
subsegment which $E$-fellow travels with $[x, y]$. Given $\epsilon$
and $E$, let $L$ and $F$ be the constants in Lemma \ref{closeness},
i.e. if $\gamma$ and $\gamma''$ are two \teichmuller geodesics of
length at least $L$, whose endpoints in $\cC(S)$ are distance at most
$E$ apart, then the distance from $\gamma$ to $\gamma'$ is at most
$F$.

As $\gamma$ lies in the thick part $K_\e$, there is a constant $D'$,
depending only on $\e$, such that if $d_\Teich(X, Y) \ge D'$, then
$d_\cC(x, y) \ge D$.  Let $Z_1$ and $Z_2$ be points along $\gamma$
such that $[X, Y] \subset [Z_1,Z_2]$, the orientations of the segments
agree, $d_\Teich(X, Y) \ge D'$, $d_\Teich(Z_1, X) > L$ and
$d_\Teich(Y, Z_2) > L$. Consider the limits sets
$\overline{S_{z_1}(z_2, R)}$ and $\overline{S_{z_2}(z_1, R)}$ in
$\partial \cC(S)$, and let $\xi^+$ and $\xi^-$ be uniquely ergodic
foliations in $\overline{S_{z_1}(z_2, R)}$ and $\overline{S_{z_2}(z_1,
  R)}$, respectively. Let $\gamma'$ be the \teichmuller geodesic with
vertical foliation $\xi^+$ and the horizontal foliation $\xi^-$. By
Lemma \ref{shadow-fellow}, the projection $\pi(\gamma')$ fellow
travels $\pi(\gamma)$ with constant $E$ between $z_1$ and $z_2$. For
clarity, denote by $Z'_1, X', Y'$ and $Z'_2$ the points of $\gamma'$
whose projections $z'_1, x', y'$ and $z'_2$ are coarsely the closest
points to $z_1, x, y$ and $z_2$ respectively, i.e. the distances
$d_\cC(z'_1, z_1), d_\cC(x', x), d_\cC(y',y)$ and $d_\cC(z'_2, z_2)$
are all at most $E$. By Lemma \ref{closeness} applied to the segments
$[Z'_1, X']$ and $[Z_1, X]$ there is a point $W_1 \in [Z'_1, X']$ such
that $d_\Teich(W_1, [Z_1, X]) \le F$. Similarly, there is a point $W_2
\in [Y', Z'_2]$ such that $d_\Teich(W_2, [Y, Z_2]) \le F$.

\begin{figure}
\begin{center}
\begin{tikzpicture}[scale=0.9]

\tikzstyle{point}=[circle, draw, fill=black, inner sep=1pt]

\draw (-5,1) node [point,label=below:{$Z_1$}] {} -- 
      (5,1) node [point,label=below:{$Z_2$}] {}
      node [pos=0.1, label=below:{$\gamma$}] {}
      node (X) [pos=0.3,point,label=below:{$X$}] {} 
      node (Y) [pos=0.7,point,label=below:{$Y$}] {}; 

\draw (-6,2) node [label=above:{$\xi^-$}] {} --
      (-5, 2) node [point,label=below:{$Z'_1$}] {} --
      (5, 2) node [point,label=below:{$Z'_2$}] {}
             node [pos=0.1,label=above:{$\gamma'$}] {}
              node (X) [pos=0.15,point,label=below:{$W_1$}] {} 
             node (X) [pos=0.3,point,label=below:{$X'$}] {} 
             node (Y) [pos=0.7,point,label=below:{$Y'$}] {}
             node (Y) [pos=0.85,point,label=below:{$W_2$}] {} --
      (6,2) node [label=above:{$\xi^+$}] {};
            
\draw (8, 1.5) node (a) {$\Teich(S)$};
\draw node (b) [below=3.5 of a]{$\cC(S)$};
\draw [->] (a) to node [right] {$\pi$} (b);

\draw (-6, -2) node [label=above:{$\xi^-$}] {} -- 
      (-5,-2) node [point,label=below:{$z'_1$}] {} --
      (5, -2) node (x') [pos=0.3,point,label=below:{$x'$}] {}
              node (y') [pos=0.7,point,label=below:{$y'$}] {} 
              node [point,label=below:{$z'_2$}] {} --
      (6, -2) node [label=above:{$\xi^+$}] {};

\draw (-5, -3) node [point,label=below:{$z_1$}] {} --
      (5, -3) node [point,label=below:{$z_2$}] {}
              node (x) [pos=0.3,point,label=below:{$x$}] {}
              node (y) [pos=0.7,point,label=below:{$y$}] {};

\draw (-4, -5) node [left] {$S_{z_2}(z_1, R)$} -- (-4, -1);
\draw (4, -5) node [right] {$S_{z_1}(z_2, R)$} -- (4, -1);

\end{tikzpicture}
\end{center}
\caption{Shadows in $\cC(S)$.}
\label{figure:shadows}
\end{figure}

By the fellow travelling result, Theorem \ref{theorem:rafi-fellow},
the \teichmuller geodesic segment $[W_1, W_2] \subset \gamma'$ fellow
travels $\gamma$ with the constant $r = B(\epsilon, F)$. In
particular, $\gamma'$ passes through $B_r(X)$ and $B_r(Y)$, and hence
lies in $\Gamma_r(X, Y)$, and so this set contains an open
neighbourhood of $(\lambda^-, \lambda^+)$. We have shown this as long
as $d_\Teich(X, Y) \ge D'$, but for $r' = 2r + D'$, every pair of balls
$B_{r'}(X')$ and $B_{r'}(Y')$ contain smaller balls $B_r(X)$ and
$B_r(Y)$ with $d_\Teich(X, Y) \ge D'$, so the stated result follows.
\end{proof}

\section{Fellow travelling of invariant and tracked geodesics}

In this section, we establish that along almost every sample path
$\omega$, for sufficiently large $n$, the invariant \teichmuller
geodesic for the pseudo-Anosov element $w_n$, has a subsegment, whose
length grows linearly in $n$, which fellow travels the \teichmuller
geodesic sublinearly tracked by $\omega$. This uses a result of
Dahmani and Horbez \cite{Dah-Hor} and the fellow travelling result,
Theorem \ref{theorem:rafi-fellow}. We fix a basepoint $X \in
\Teich(S)$.

We require a slight rephrasing of a result of Dahmani and Horbez. Let
$\ell$ be the drift of the random walk in the \teichmuller
metric. Kaimanovich and Masur \cite{km} showed that almost every
bi-infinite sample path $\omega$ converges to distinct uniquely
ergodic measured foliations $\lambda^+_\omega$ and $\lambda^-_\omega$,
with $w_n X$ converging to $\lambda^+_\omega$, and $w_{-n} X$
converging to $\lambda^-_\omega$ as $n \to \infty$. Let
$\gamma_\omega$ be the unique bi-infinite \teichmuller geodesic
determined by these foliations, and we shall give $\gamma_\omega$ a
unit speed parameterization, such that $\gamma_\omega(0)$ is a closest
point on $\gamma_\omega$ to $X$, and as $t \to \infty$ the geodesic
$\gamma_\omega(t)$ converges to $\lambda^+$.  If $w_n$ is
pseudo-Anosov, then we shall write $\gamma_{\omega_n}$ for its
invariant \teichmuller geodesic.

Steps 1 and 3 in the proof of \cite[Theorem 2.6]{Dah-Hor}, stated in
the context of \teichmuller space, can be rephrased as follows:

\begin{proposition}\label{Dahmani-Horbez}
Given $\e > 0$, there are constants $F > 0$ and $0 < e <
\tfrac{1}{2}$, such that for almost every $\omega$, there exists $N$,
such that for all $n \geqslant N$, there are points $Y_0$ and $Y_1$ of
$\gamma_{w_n}$ and points $\gamma_\omega(T_0)$ and
$\gamma_\omega(T_1)$ of $\gamma_\omega$, such that
\begin{enumerate}
\item $d_{\Teich}(\gamma_\omega(T_0), Y_0) \leqslant F$, 
\item $d_{\Teich}(\gamma_\omega(T_1), Y_1) \leqslant F$,
\item $0 \leqslant T_0 \leqslant e \ell n \leqslant (1-e) \ell n \leqslant T_1 \leqslant \ell n$, and
\item $\gamma_\omega(T_0)$ and $\gamma_\omega(T_1)$ are in the thick part $K_\epsilon$.
\end{enumerate}  
\end{proposition}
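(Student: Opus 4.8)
The plan is to transcribe into \teichmuller space Steps 1 and 3 of the proof of \cite[Theorem 2.6]{Dah-Hor}: we carry out those arguments in the curve complex $\cC(S)$, where $\delta$-hyperbolicity is available, and transfer the conclusion to $\Teich(S)$ using Lemma \ref{closeness}. The ambient inputs are standard. By Kaimanovich and Masur \cite{km}, almost every $\omega$ has $w_nX\to\lambda^+_\omega$ and $w_{-n}X\to\lambda^-_\omega$, determining $\gamma_\omega$; the finite first moment hypothesis gives the drift $\ell>0$ and sublinear tracking $\tfrac1n d_\Teich(w_nX,\gamma_\omega(\ell n))\to 0$, which by the coarsely Lipschitz projection \eqref{eq:reducing} also holds in $\cC(S)$; by Maher and Tiozzo \cite{MT}, almost surely $w_n$ is pseudo-Anosov for all large $n$, so $\gamma_{w_n}$ is defined; and \cite{Dah-Hor} gives $\tau_\cC(w_n)/n\to\ell_\cC$, where $\ell_\cC=\lim\tfrac1n d_\cC(\pi(X),\pi(w_nX))>0$ is the curve-complex drift.

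\emph{Step 1: alignment in $\cC(S)$.} As $d_\cC(\pi(X),\pi(w_nX))-\tau_\cC(w_n)=o(n)$, a nearest-point projection estimate in the $\delta$-hyperbolic space $\cC(S)$ shows that the distance from $\pi(X)$ to the uniform quasi-axis $\pi(\gamma_{w_n})$ of $w_n$ is $o(n)$; applying the isometry $w_n$, the distance from $\pi(w_nX)$ to $\pi(\gamma_{w_n})$ is likewise $o(n)$. Since $d_\cC(\pi(X),\pi(\gamma_\omega(0)))$ is a.s. finite and $d_\cC(\pi(w_nX),\pi(\gamma_\omega(\ell n)))=o(n)$, the bi-infinite quasigeodesic $\pi(\gamma_{w_n})$ passes within $o(n)$ of each of $\pi(\gamma_\omega(0))$ and $\pi(\gamma_\omega(\ell n))$, two points at mutual distance comparable to $n$. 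The thin quadrilateral inequality, together with the Morse lemma for the relevant quasigeodesics, now produces a constant $E$, depending only on $S$, and a function $\rho(n)=o(n)$, such that $\pi(\gamma_\omega(t))$ lies within $E$ of $\pi(\gamma_{w_n})$ for all $t\in[\rho(n),\ell n-\rho(n)]$. It is essential that $E$ is bounded rather than $o(n)$; ensuring this is exactly why the $o(n)$ of slack at each end of the matched range is discarded.

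\emph{Step 3: thick segments and the transfer to $\Teich(S)$.} Let $E$ be the constant of Step 1 and $L=L(\epsilon,E,S)$ the corresponding length from Lemma \ref{closeness}. The key input from \cite{Dah-Hor} is that, for $\epsilon$ small enough depending on $\mu$ and for almost every $\omega$, $\gamma_\omega$ contains sub-segments of length $L$ lying in $K_\epsilon$ that occur with positive frequency along $\gamma_\omega$; this rests on recurrence of $\gamma_\omega$ to a uniform thick part, via the Birkhoff ergodic theorem for the shift on $\Mod(S)^{\Z}$ and the fact that $\Mod(S)$ contains pseudo-Anosov elements with compact, hence uniformly thick, invariant geodesics. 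Granting this, for all large $n$ the interval $[\rho(n),e\ell n]$ --- long, since $\rho(n)=o(n)$ --- contains a sub-segment $\sigma\subset\gamma_\omega$ of length $L$ inside $K_\epsilon$. By Step 1 the two endpoints of $\sigma$ lie within $E$ in $\cC(S)$ of $\pi(\gamma_{w_n})$, so there is a sub-arc $\gamma'\subset\gamma_{w_n}$ whose $\cC(S)$-endpoints are within $E$ of those of $\sigma$; Lemma \ref{closeness} applied to $\sigma$ and $\gamma'$ gives $Y_0\in\gamma_{w_n}$ with $d_\Teich(Y_0,\sigma)\le F$, where $F=F(\epsilon,E,S)$. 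Pick $\gamma_\omega(T_0)\in\sigma$ with $d_\Teich(Y_0,\gamma_\omega(T_0))\le F$; the same argument with the interval $[(1-e)\ell n,\ell n-\rho(n)]$ yields $Y_1\in\gamma_{w_n}$ and $\gamma_\omega(T_1)$. Then (1), (2) hold by construction, (4) holds since $\gamma_\omega(T_0),\gamma_\omega(T_1)\in K_\epsilon$, and (3) holds since $T_0\in[0,e\ell n]$ and $T_1\in[(1-e)\ell n,\ell n]$. Any fixed $e\in(0,\tfrac12)$ works, the dependence on $n$ being absorbed into $N$, and $F,e$ depend only on $\epsilon$ (and $S,\mu$).

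The main obstacle will be the interface between $\cC(S)$ and $\Teich(S)$: closeness in $\cC(S)$ carries no control on \teichmuller distance, so the final transfer genuinely requires the thick-part hypothesis, and Lemma \ref{closeness} --- resting on Rafi's thick--thin analysis, and feeding into Theorem \ref{theorem:rafi-fellow} in the subsequent application to Theorem \ref{main} --- is the indispensable device. The remaining issues are bookkeeping: keeping $E$, hence $F$, bounded confines the comparison to the central part of the matched range; all constants must be uniform in $n$ and $\omega$; and $\epsilon$ must be small enough that $\gamma_\omega$ is thick-recurrent to $K_\epsilon$, the statement being vacuous once $\epsilon$ exceeds the maximal systole of $S$.
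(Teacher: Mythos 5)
Your argument is correct in outline, but it takes a genuinely different route, because the paper does not actually prove this Proposition: it presents it explicitly as a rephrasing of Steps 1 and 3 of \cite[Theorem 2.6]{Dah-Hor}, and the only thing the paper proves is Lemma \ref{thick}, translating Dahmani--Horbez's abstract ``contraction point'' hypothesis into the thick-part condition (4). Your Step 1 (aligning $\pi(\gamma_{w_n})$ with $\pi(\gamma_\omega)$ via translation-length estimates, sublinear tracking, and a thin-quadrilateral/Morse argument in $\cC(S)$) essentially unpacks what the paper cites as a black box; the real divergence is in Step 3. The paper's route extracts points on $\gamma_\omega$ from DH's contraction property and then \emph{deduces} they are thick via Lemma \ref{thick}; you instead \emph{choose} $\gamma_\omega(T_0),\gamma_\omega(T_1)$ inside recurrent thick segments and transfer curve-complex fellow travelling to \teichmuller closeness via Lemma \ref{closeness}. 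Your route is more self-contained and \teichmuller-native, making Lemma \ref{closeness} do the heavy lifting; the paper's is shorter, outsourcing the $\delta$-hyperbolic alignment to DH and supplying only the one geometric translation in Lemma \ref{thick}. One loose thread in your version: the positive-frequency thick-recurrence of $\gamma_\omega$ is the load-bearing input of your Step 3, and it deserves a precise citation (Kaimanovich--Masur, or the discussion around \cite[Propositions 3.6--3.7]{Dah-Hor}) rather than a loose appeal to Birkhoff; as stated it reads as assertion, not proof. You are also right that ``Given $\epsilon>0$'' must implicitly mean $\epsilon$ small enough for $K_\epsilon$ to be recurrent --- the paper relies on this silently when it invokes the Proposition with $\epsilon$ chosen so that $\gamma_g\subset K_\epsilon$.
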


Dahmani and Horbez state condition (4) in terms of a ``contraction''
property that they define: $\gamma_\omega(T_0)$ and
$\gamma_\omega(T_1)$ are ``contraction'' points on $\gamma_\omega$ for
the projection map to the curve complex. In effect, the property being
used by them is that under the projection to the curve complex
$\gamma_\omega$ makes definite progress at $\gamma_\omega(T_0)$ and
$\gamma_\omega(T_1)$. See the discussion related to \cite[Propositions
3.6 and 3.7]{Dah-Hor}. We recall their precise definition
\cite[Definition 3.5]{Dah-Hor} for definite progress here:

\begin{definition}
Given constants $B, C> 0$, a \teichmuller geodesic $\gamma$ makes $(B,
C)$-progress at a point $Y = \gamma(T)$ if the image under $\pi$ of
the subsegment of $\gamma$ of length $B$ starting at $Y$ has diameter
at least $C$ in the curve complex.
\end{definition} 

For completeness, we prove that definite progress implies thickness. 

\begin{lemma}\label{thick}
If $\gamma$ makes $(B,C)$-progress at $Y$, then there is a constant
$\epsilon > 0$, which depends on $B$ and $C$, such that $Y$ lies in
the thick part $K_\epsilon$.
\end{lemma}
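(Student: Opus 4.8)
The plan is to prove the contrapositive: if $Y = \gamma(T)$ lies in the $\epsilon$-thin part for arbitrarily small $\epsilon$, then $\gamma$ cannot make $(B,C)$-progress at $Y$ for a fixed $B$ and a suitably large $C$. The mechanism is that a Teichm\"uller geodesic passing through a very thin part of Teichm\"uller space has its curve-complex projection ``stalled'': the short curve of the thin surface is a uniformly bounded-distance approximation to $\pi(\gamma(t))$ for all $t$ in a long window around $T$, so the diameter of $\pi$ of any length-$B$ subsegment starting at $Y$ is small.

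First I would recall the standard estimate controlling the curve complex projection near a thin surface. If $X$ is a hyperbolic surface with a curve $\alpha$ of hyperbolic length $\ell_X(\alpha) < \epsilon$, then $d_{\cC(S)}(\pi(X), \alpha) \le M_0$ for a constant $M_0$ depending only on $S$ (indeed $\alpha$ is disjoint from, or equal to, a systole once $\epsilon$ is below the Margulis constant, and systoles have uniformly bounded diameter in $\cC(S)$). Second, I would quantify how long the geodesic stays near this thin surface: by the coarse Lipschitz property \eqref{eq:reducing}, $d_{\cC(S)}(\pi(\gamma(T)), \pi(\gamma(t))) < M_1 |t - T| + A_1$ for all $t$. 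But more is true — the short curve $\alpha$ remains short, and in fact remains a definite-distance approximation to $\pi(\gamma(t))$, for an interval of parameter length that tends to infinity as $\epsilon \to 0$. This is exactly Rafi's combinatorial model / the active interval estimate for short curves along Teichm\"uller geodesics: a curve of hyperbolic length $< \epsilon$ at $\gamma(T)$ has $\ell_{\gamma(t)}(\alpha) < \epsilon'$ for $|t-T| \le \phi(\epsilon)$ where $\phi(\epsilon) \to \infty$ as $\epsilon \to 0$ and $\epsilon' \to 0$ with $\epsilon$, by the metric regular neighbourhood property of thick parts recalled in the preliminaries (a $D$-neighbourhood of $K_\epsilon$ lies in $K_{\epsilon'}$; contrapositively, the thin part is ``persistent'' along geodesics).

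Third, combining these: choose $\epsilon$ small enough that the persistence interval $\phi(\epsilon)$ exceeds $B$, and small enough that $\epsilon' $ is below the Margulis constant. Then for every $t \in [T, T+B]$, the curve $\alpha$ (short at $\gamma(T)$, still short at $\gamma(t)$) satisfies $d_{\cC(S)}(\pi(\gamma(t)), \alpha) \le M_0$, hence $d_{\cC(S)}(\pi(\gamma(t)), \pi(\gamma(T))) \le 2M_0$, so the image under $\pi$ of the length-$B$ subsegment of $\gamma$ starting at $Y$ has diameter at most $2M_0$. If we have assumed $(B,C)$-progress at $Y$ with $C > 2M_0$, this is a contradiction; so in fact $Y \in K_\epsilon$ for the $\epsilon$ we chose, which depends only on $B$ and $C$ (through $M_0$, the Margulis constant, and the persistence function $\phi$, all of which depend only on $S$).

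The main obstacle is making precise the ``persistence'' statement — that a curve which is $\epsilon$-short at one point of a Teichm\"uller geodesic stays $\epsilon'$-short for a window of definite length, with the window growing as $\epsilon \to 0$. I would either cite this directly from Rafi \cite{Raf} (the description of the set of times at which a curve is short, in terms of the geometry of the quadratic differential), or derive it from the metric-neighbourhood-of-thick-part fact already stated in the preliminaries together with continuity of hyperbolic length along the geodesic. Everything else — the coarse Lipschitz bound, the bounded diameter of systoles, the relation between short curves and the projection $\pi$ — is standard and quoted in the excerpt.
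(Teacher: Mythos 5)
Your contrapositive argument is correct in outline and reaches the same conclusion, but it replaces the paper's key quantitative ingredient with a qualitative one. Both proofs hinge on Wolpert's lemma to show that a curve $\alpha$ that is very short at $Y = \gamma(T)$ remains short on the whole length-$B$ subsegment. After that they diverge: the paper runs the quantitative collar lemma (Matelski's explicit lower bound on collar width) against the intersection-number lower bound coming from $d_\cC(\alpha,\beta)\ge C$, and extracts from the resulting inequality an explicit lower bound on $\ell_Y(\alpha)$ in terms of $B$ and $C$; you instead invoke the Margulis consequence that curves below the Margulis constant are pairwise disjoint, so $\alpha$ stays within uniformly bounded $\cC(S)$-distance of the systole along the whole subsegment, bounding the projected diameter by a universal $2M_0$. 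Your route is cleaner, but it only produces a nontrivial $\epsilon$ once $C$ exceeds the fixed threshold $2M_0$, whereas the paper's computation keeps an explicit $C$-dependence; for the way the lemma is used downstream this does not matter.

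One imprecision worth flagging: the ``metric regular neighbourhood of the thick part'' fact recalled in the preliminaries only says that \emph{some} curve stays short under a bounded Teichm\"uller-metric displacement; it does not by itself track the \emph{same} curve $\alpha$, which is what you need for the bounded-diameter conclusion (otherwise the chain of short curves could drift through $\cC(S)$). The persistence of the specific curve $\alpha$ really does come from the Lipschitz bound $\ell_{\gamma(t)}(\alpha)\le e^{2|t-T|}\ell_{\gamma(T)}(\alpha)$, i.e.\ Wolpert's lemma, which is exactly the tool the paper makes explicit. You gesture at this with ``continuity of hyperbolic length,'' so your argument is repairable as written, but it should be stated as Wolpert's inequality rather than attributed to the thick-neighbourhood statement.
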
 

\begin{proof}
Let $\alpha$ be the systole for the hyperbolic surface $Y$. For any point $Y'$ on the subsegment, Wolpert's lemma implies
\[ 
\ell_{Y'}(\alpha) \leqslant e^B \ell_Y(\alpha).
\]
We will use the following version of the Collar Lemma, due to Matelski
\cite{matelski}, which states that a simple closed geodesic of length
$\ell$ is contained in an embedded annular collar neighbourhood of
width at least $w_\ell$, where a lower bound for $w_\ell$ is given by
\[
\sinh^{-1} \left( \frac{1}{\sinh (\ell/2)} \right),
\]
and furthermore, this lower bounds holds for all $\ell > 0$.  Thus the
width of the collar neighbourhood for $\alpha$ in the hyperbolic
metric corresponding to $Y'$ is bounded below by
\[
\sinh^{-1} \left( \frac{1}{ \sinh ( e^B \ell_Y(\alpha)/2)} \right), 
\]
and the bound tends to infinity monotonically as $\ell_Y(\alpha)$
tends to zero.  Suppose $\beta$ is the systole at $Y'$, and
$d_{\mathcal{C}}(\alpha, \beta) \geqslant C$. This implies that the
intersection number satisfies
\[
i(\alpha, \beta) \geqslant \frac{C-1}{2}.
\]
From the lower bound on the width of the collar, the length of $\beta$ has to satisfy
\[
\ell_{Y'}(\beta) \geqslant  \frac{C-1}{2} \sinh^{-1} \left( \frac{1}{ \sinh ( e^B \ell_Y(\alpha)/2)} \right).
\]
Since $\beta$ is the systole at $Y'$, the length of $\beta$ at $Y'$ is
at most the length of $\alpha$ at $Y'$, so one obtains
\[
e^B \ell_Y(\alpha) \geqslant \frac{C-1}{2} \sinh^{-1} \left( \frac{1}{ \sinh ( e^B \ell_Y(\alpha)/2)} \right).
\]
Note that $\sinh$ is monotonically increasing, zero at zero, and
unbounded, so this gives a lower bound $\epsilon$ on how small
$\ell_Y(\alpha)$ can be, which depends on $B$ and $C$.
\end{proof}

\begin{remark}\label{inv-fellow}
Lemma \ref{thick} implies that the points $\gamma_\omega(T_0)$ and
$\gamma_\omega(T_1)$ in Proposition \ref{Dahmani-Horbez} are in a
thick part $K_\epsilon$. By the fellow travelling result, Theorem
\ref{theorem:rafi-fellow} the geodesics $\gamma_\omega$ and
$\gamma_{w_n}$ fellow travel between $\gamma_\omega(T_0)$ and
$\gamma_\omega(T_1)$. Let $s = B(\epsilon, F)$ be the constant for
fellow traveling of $\gamma_\omega$ and $\gamma_{w_n}$.
\end{remark}

\section*{Ubiquity of segments contained in the principal stratum}

We now show that for a pseudo-Anosov element $g$ in the support of
$\mu$, there is a positive probability that the geodesic
$\gamma_\omega$ fellow travels the invariant geodesic $\gamma_g$.  We
shall write $\nu$ for the harmonic measure on $\UE$, and $\check{\nu}$
for the reflected harmonic measure, i.e the harmonic measure arising
from the random walk generated by the probability distribution $\check
\mu(g) = \mu(g^{-1})$.

\begin{lemma}\label{lemma:positive}
Let $g$ be a pseudo-Anosov element contained in the support of $\mu$
with invariant \teichmuller geodesic $\gamma_g$.  Then there is a
constant $r > 0$ such that $\rnu \times \nu(\Gamma_r(X, Y)) > 0$ for
all $X$ and $Y$ on $\gamma_g$.

Furthermore, there is a constant $\rho > 0$, depending on $g$, such
that for all constants $D \ge 0$, there is a positive probability
(that depends on $D$) for the subsegment of $\gamma_\omega$ of length
$D$, centered at a closest point on $\gamma_\omega$ to the basepoint,
to $\rho$-fellow travel with $\gamma_g$.
\end{lemma}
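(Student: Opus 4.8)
The plan is to deduce both statements from Proposition \ref{shadows} together with the standard positivity of harmonic measure on shadows. For the first statement, observe that $\gamma_g$ is the invariant geodesic of a pseudo-Anosov, hence is $\Mod(S)$-cocompact and in particular lies in some thick part $K_\e$; apply Proposition \ref{shadows} to obtain a radius $r > 0$ such that for any $X, Y$ on $\gamma_g$ the set $\Gamma_r(X, Y) \subset \UE \times \UE$ contains an open neighbourhood $U^- \times U^+$ of $(\lambda_g^-, \lambda_g^+)$. Since $\lambda_g^+$ is the attracting fixed foliation of the pseudo-Anosov $g \in \mathrm{Supp}(\mu)$, and $\lambda_g^-$ is its repelling foliation (equivalently the attracting foliation of $g^{-1}$), both points lie in the respective limit sets of $H$ and of the reflected group. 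Non-elementarity of $H$ (hypothesis (2)) means the harmonic measure $\nu$ has support equal to the full limit set $\Lambda H \subset \UE$ in the Gromov boundary, and similarly $\rnu$ for the reflected walk; in particular every open neighbourhood of a point of the limit set has positive harmonic measure. Hence $\nu(U^+) > 0$ and $\rnu(U^-) > 0$, so $\rnu \times \nu(\Gamma_r(X,Y)) \ge \rnu(U^-)\,\nu(U^+) > 0$.

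For the second statement, I would first fix $r$ as above and let $X_0 = \gamma_\omega(0)$ be a closest point on $\gamma_\omega$ to the basepoint $X$. Given $D \ge 0$, choose points $X_D^\pm$ on $\gamma_g$ at distance $D/2$ on either side of some fixed point of $\gamma_g$, so that the subsegment $[X_D^-, X_D^+]$ of $\gamma_g$ has length $D$; by Proposition \ref{shadows} (applied with this pair of points and enlarging $r$ if necessary in the manner indicated at the end of its proof, using $r' = 2r + D'$) the set $\Gamma_r(X_D^-, X_D^+)$ again contains a neighbourhood of $(\lambda_g^-,\lambda_g^+)$, which has positive $\rnu \times \nu$-measure. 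The event that the bi-infinite sample path $\omega$ has its pair of limit foliations $(\lambda^-_\omega, \lambda^+_\omega)$ lying in $\Gamma_r(X_D^-, X_D^+)$ therefore has positive probability, since the distribution of $(\lambda_\omega^-, \lambda_\omega^+)$ is $\rnu \times \nu$ (up to the issue of independence of the forward and backward trajectories, which holds in the limit for the boundary points, or can be handled by noting $\Gamma_r$ is a product set and the two harmonic measures are the marginals). On this event $\gamma_\omega$ is the unique geodesic with those endpoints, and by definition of $\Gamma_r$ it enters $B_r(X_D^-)$ and $B_r(X_D^+)$; by the fellow travelling Theorem \ref{theorem:rafi-fellow} (with $\e$ the thickness constant of $\gamma_g$ and $A = r$) the subsegment of $\gamma_\omega$ between these two visits $B(\e,r)$-fellow travels the subsegment $[X_D^-, X_D^+]$ of $\gamma_g$. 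Setting $\rho = B(\e, r') + r'$, which depends only on $g$ (through $\e$ and $r$, hence through $r'$, but the reader should check that the $+D'$ summand does not introduce $D$-dependence into $\rho$ — indeed $D'$ depends only on $\e$), the central length-$D$ subsegment of $\gamma_\omega$ at $X_0$ is within the window we have controlled once $D$ is large relative to the location of $X_0$, and $\rho$-fellow travels $\gamma_g$.

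The main obstacle is the bookkeeping at the last step: one must arrange that the length-$D$ subsegment of $\gamma_\omega$ \emph{centered at the closest point to the basepoint} is the one that fellow travels $\gamma_g$, rather than merely \emph{some} length-$D$ subsegment. This requires choosing the translate of $\gamma_g$ and the points $X_D^\pm$ so that the fellow-travelling window straddles $X_0 = \gamma_\omega(0)$; since we are free to replace $g$ by a conjugate and to shift along $\gamma_g$, and since the parametrization of $\gamma_\omega$ is pinned by the basepoint, this can be done at the cost of a slightly larger probability-dependence on $D$, but it is the point that needs care. A secondary technical point is the precise relation between the joint law of $(\lambda^-_\omega, \lambda^+_\omega)$ and the product $\rnu \times \nu$; I would cite Kaimanovich–Masur \cite{km} for the identification of the forward and backward limits and note that the two are built from independent increments of the bi-infinite walk so that the joint hitting measure dominates a product on product sets, which is all that positivity requires.
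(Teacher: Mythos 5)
Your proposal is essentially correct and follows the same strategy as the paper, but with one presentational difference in the first claim that is worth noting. You deduce $\nu(U^+) > 0$ and $\rnu(U^-) > 0$ by asserting that the support of the harmonic measure equals the full limit set $\Lambda H$. The paper avoids that global statement: it cites \cite[Proposition 5.4]{MT} directly, which gives a radius $R_0$ such that $\nu\bigl(\overline{S_{x_0}(h x_0, R_0)}\bigr) > 0$ for every $h$ in the \emph{semigroup} generated by $\mathrm{Supp}(\mu)$, and then uses the nested intersection
\[
\bigcap_{i \in \N} \overline{S_{x_0}(g^{i} x_0, R_0)} = \lambda_g^+, \qquad
\bigcap_{i \in \N} \overline{S_{x_0}(g^{-i} x_0, R_0)} = \lambda_g^-,
\]
to find an $i$ with those shadow closures inside $U^+$ and $U^-$. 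This makes the argument self-contained: the equality of the support of $\nu$ with the limit set is itself usually established by exactly this shadow-positivity argument, and in any case only positivity at $\lambda_g^\pm$ (which lie in the semigroup limit sets for $\mu$ and $\check\mu$, respectively) is needed, so citing the more global fact is unnecessary and not obviously covered by the references.

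For the second claim you correctly isolate the key bookkeeping subtlety — arranging that the length-$D$ window is centered at the closest point of $\gamma_\omega$ to the basepoint, not just at some point — but you leave it as a flag rather than resolving it. The paper's resolution is to place the length-$D$ subsegment of $[X,Y] \subset \gamma_g$ centered at a closest point of $\gamma_g$ (not $\gamma_\omega$) to the basepoint $X_0$, and then use that this set of closest points has bounded diameter since $\gamma_g$ is contained in a thick part; the $\rho$-fellow travelling of $\gamma_\omega$ with $[X,Y]$ transports this control to $\gamma_\omega$, at the cost of a fixed additive constant that can be absorbed by taking $[X,Y]$ slightly longer than $D$ (the probability is allowed to depend on $D$). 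Finally, your caution about the joint law of $(\lambda_\omega^-, \lambda_\omega^+)$ is unnecessary: the forward and backward increments of the bi-infinite walk are independent, so the joint law is exactly $\rnu \times \nu$, not merely a measure dominating a product.
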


\begin{proof}
Let $\lambda^+$ and $ \lambda^- \in \PMF$ be the vertical and
horizontal foliations of $\gamma_g$.
Fix an $\e > 0$ such that the thick part $K_\epsilon$ contains the
geodesic $\gamma_g$. Let $r$ be the constant in Proposition
\ref{shadows}, i.e. for any points $X$ and $Y$ on $\gamma_g$, the set
$\Gamma_r(X, Y)$ contains an open neighbourhood of $(\lambda^-,
\lambda^+)$.  We recall:

\begin{proposition}\cite[Proposition 5.4]{MT}\label{positive-measure}
Let $G$ be a non-elementary, countable group acting by isometries on a
separable Gromov hyperbolic space $X$, and let $\mu$ be a
non-elementary probability distribution on $G$. Then there is a number
$R_0$ such that for any group element $g$ in the semigroup generated
by the support of $\mu$, the closure of the shadow $S_{x_0}(g x_0,
R_0)$ has positive hitting measure for the random walk determined by
$\mu$.
\end{proposition} 

Let $x_0 = \pi(X_0)$ be the projection of the basepoint $X_0$ into the
curve complex.  We may assume that $\Gamma_r(X, Y)$ contains an open
neighbourhood of $(\lambda^-, \lambda^+)$ of the form $U^- \times
U^+$, where $U^-$ is an open neighbourhood of $\lambda^-$ in $\UE$,
and $U^+$ is an open neighbourhood of $\lambda^+$ in $\UE$.  As
\[ \bigcap_{i \in \N} \overline{S_{x_0}(g^{-i} x_0, R_0)} = \lambda^-
\text{ and } \bigcap_{i \in \N} \overline{S_{x_0}(g^{i} x_0, R_0)} =
\lambda^+, \]
there is an integer $i$, such that the limit sets of the shadows are
contained in the open neighbourhoods of $\lambda^+$ and $\lambda^-$,
i.e.
\[ \overline{S_{x_0}(g^{-i} x_0, R_0)} \cap \UE \subset U^- \text{ and
} \overline{S_{x_0}(g^{i} x_0, R_0)} \cap \UE \subset U^+. \]
The element $g^{-1}$ is in the semigroup generated by the inverses of
$\text{Supp}(\mu)$, i.e. $g^{-1} \in \text{Supp}(\check{\mu})$. Hence,
by Proposition \ref{positive-measure},
\[
\rnu \times \nu \left( \overline{S_{x_0}(g^{-i} x_0, R_0)} \times
\overline{S_{x_0}(g^{i} x_0, R_0)} \right) > 0,
\]
and so $\rnu \times \nu (\Gamma_r(X, Y)) > 0$, as required.

The final statement then follows from Theorem
\ref{theorem:rafi-fellow}, which implies that there is a $\rho > 0$
such that any geodesic in $\Gamma_r(X, Y)$ must $\rho$-fellow travel
$[X, Y]$, as required. Here we may choose $X$ and $Y$ on $\gamma_g$
such that the geodesic $[X, Y]$ contains a subsegment of length $D$
centered at any closest point on $\gamma_g$ to the basepoint $X_0$; as
$\gamma_g$ is contained in a thick part $K_\e$, the set of closest
points on $\gamma_g$ to $X_0$ has bounded diameter, depending only on
$\e$ and the surface $S$.
\end{proof}

We now make use of the principal stratum assumption, i.e. that the
semigroup generated by $\text{Supp}(\mu)$ contains a pseudo-Anosov
$g$ whose invariant \teichmuller geodesic $\gamma_g$ lies in the
principal stratum.  We first prove the following proposition:

\begin{proposition}\label{prop:principal}
Let $g$ be a pseudo-Anosov element of $\Mod(S)$, whose invariant
\teichmuller geodesic is contained in the principal stratum.  For any
$\rho> 0$, there is a constant $D> 0$, depending on $\rho$ and $g$,
such that for any pair of points $X,Y$ on $\gamma_g$ with $d_\Teich
(X, Y) \ge D$, any \teichmuller geodesic in $\Gamma_\rho(X, Y)$ lies
in the principal stratum.
\end{proposition}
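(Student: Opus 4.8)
The plan is to argue by contradiction, via a compactness argument in the space of unit-area quadratic differentials $Q(S)$ together with Rafi's fellow travelling theorem, Theorem \ref{theorem:rafi-fellow}. Two facts set the stage. First, the singularity data of a quadratic differential is preserved by the \teichmuller flow, so the stratum of a \teichmuller geodesic is well defined, and ``lies in the principal stratum'' means that the quadratic differential at every point of the geodesic has only simple zeros. The principal stratum is open in $Q(S)$ (simple zeros persist under small perturbations, while the total zero multiplicity is fixed) and $\Mod(S)$-invariant, so its complement $\Sigma \subseteq Q(S)$ is closed and $\Mod(S)$-invariant. Second, as $g$ is pseudo-Anosov, $\gamma_g$ descends to a closed geodesic in moduli space and hence lies in some thick part $K_\e$; fix such an $\e$.

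Suppose the proposition fails for some $\rho > 0$. Then for each $n \in \N$ there are points $X_n, Y_n$ on $\gamma_g$ with $d_\Teich(X_n, Y_n) \ge n$ and a geodesic $\gamma'_n \in \Gamma_\rho(X_n, Y_n)$ not lying in the principal stratum. Pick $X'_n \in \gamma'_n \cap B_\rho(X_n)$ and $Y'_n \in \gamma'_n \cap B_\rho(Y_n)$; since $X_n, Y_n \in K_\e$, Theorem \ref{theorem:rafi-fellow} makes the subsegment $[X'_n, Y'_n]$ of $\gamma'_n$ a parameterized $B(\e,\rho)$-fellow traveller of $[X_n, Y_n] \subseteq \gamma_g$. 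Choose $W_n$ on $[X'_n, Y'_n]$ roughly midway, and $Z_n$ on $[X_n, Y_n]$ with $d_\Teich(W_n, Z_n) \le B_0$, where $B_0$ is a constant depending only on $\e, \rho$; then, with arclength parameterizations of $\gamma'_n$ and $\gamma_g$ synchronised at $W_n$ and $Z_n$, the two stay within $B_0$ of each other over a parameter interval $[-R_n, R_n]$ with $R_n \to \infty$ (here I let $B_0$ absorb the bounded parameter shifts coming from Theorem \ref{theorem:rafi-fellow}). As $\gamma_g / \langle g\rangle$ is compact, there is $k_n \in \Z$ with $g^{k_n} Z_n$ in a fixed compact fundamental segment of $\gamma_g$; replacing $\gamma'_n, W_n, Z_n$ by $g^{k_n}\gamma'_n, g^{k_n}W_n, g^{k_n}Z_n$ alters neither the stratum of $\gamma'_n$ nor the fellow travelling with $\gamma_g = g^{k_n}\gamma_g$, so I may assume the $Z_n$, hence the $W_n$, lie in a fixed compact subset of $\Teich(S)$. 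Passing to a subsequence, $Z_n \to Z_\infty \in \gamma_g$, $W_n \to W_\infty$, and the unit-area quadratic differential $q_n$ defining the oriented geodesic $\gamma'_n$ at $W_n$ converges in $Q(S)$ to some $q_\infty$ over $W_\infty$; since the stratum is constant along $\gamma'_n$, each $q_n \in \Sigma$, and as $\Sigma$ is closed, $q_\infty \in \Sigma$. Let $\gamma'_\infty$ be the bi-infinite \teichmuller geodesic determined by $q_\infty$. Because the \teichmuller flow is continuous and the unit tangent of $\gamma_g$ at $Z_n$ converges to that at $Z_\infty$, the arclength parameterizations of $\gamma'_n$ and $\gamma_g$ converge uniformly on compact intervals to those of $\gamma'_\infty$ and $\gamma_g$; letting $n \to \infty$ in the fellow travelling bound over $[-R_n, R_n]$, $\gamma'_\infty$ and $\gamma_g$ are parameterized $B_0$-fellow travellers for all time.

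In particular $\gamma'_\infty$ lies in a metric $B_0$-neighbourhood of $K_\e$, hence in a thick part $K_{\e'}$, so by Kerckhoff--Masur--Smillie its vertical and horizontal foliations are uniquely ergodic. Its curve-complex projection $\pi(\gamma'_\infty)$ is a quasigeodesic lying at bounded Hausdorff distance from $\pi(\gamma_g)$ (as $\pi$ is coarsely Lipschitz), so $\gamma'_\infty$ and $\gamma_g$ have the same pair of endpoints in $\partial\cC(S)$; by Klarreich's description of $\partial\cC(S)$ and unique ergodicity, the vertical and horizontal foliations of $\gamma'_\infty$ equal, as projective measured foliations, those of $\gamma_g$. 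By Hubbard--Masur there is a unique bi-infinite \teichmuller geodesic with a prescribed pair of uniquely ergodic vertical and horizontal foliations, so $\gamma'_\infty = \gamma_g$. Then $q_\infty$ is a unit-area quadratic differential compatible with the flat structure of $\gamma_g$, which lies in the principal stratum, whence $q_\infty \notin \Sigma$ --- contradicting $q_\infty \in \Sigma$. This proves the proposition; the same argument in fact shows that for $D$ large every geodesic in $\Gamma_\rho(X,Y)$ lies in the \emph{same} stratum as $\gamma_g$.

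I expect the main obstacle to be the normalization in the compactness argument: Theorem \ref{theorem:rafi-fellow} only provides the $B_0$-fellow travelling for particular arclength parameterizations on a finite interval, so obtaining a bound that persists for all time in the limit requires care both in synchronising the oriented, unit-speed parameterizations of $\gamma'_n$ and $\gamma_g$ near the base points $W_n, Z_n$ and in using the $\langle g\rangle$-action to keep $W_n, Z_n$ in a fixed compact region of $\Teich(S)$ without disturbing the fellow travelling with $\gamma_g$. The only ingredient not already in place in the paper is the closedness of the non-principal locus $\Sigma$ in $Q(S)$, which is classical.
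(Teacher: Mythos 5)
Your proposal follows essentially the same strategy as the paper's proof: argue by contradiction, normalize a sequence of offending geodesic segments using the $\langle g\rangle$-action so that their base points lie in a fixed compact set, pass to a limit quadratic differential in $Q(S)$ using compactness, note that the non-principal locus is closed so the limit stays there, and conclude that the limit geodesic fellow-travels $\gamma_g$ for all time. Where you diverge is in the final step. The paper invokes Masur's asymptoticity theorem (\cite[Theorem 2]{Mas2}) to conclude that the limit geodesic has the same vertical foliation as $\gamma_g$, and declares this an immediate contradiction with the limit lying in a non-principal stratum. You instead conclude the limit geodesic actually \emph{equals} $\gamma_g$, by projecting to $\cC(S)$ to match both boundary points, using Kerckhoff--Masur--Smillie to get unique ergodicity of the limit's foliations, Klarreich to identify them with those of $\gamma_g$, and Hubbard--Masur uniqueness to conclude equality. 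Your route is more explicit about why the limit foliation inherits the singularity data of $\gamma_g$ (by pinning down the geodesic itself), whereas the paper's terse invocation of "same vertical foliation $\Rightarrow$ contradiction" implicitly relies on the fact that for uniquely ergodic foliations the singularity structure is an invariant of the projective class. Both are correct; yours is somewhat more self-contained in that it avoids Masur's asymptoticity theorem in favour of ingredients already used elsewhere in the paper (Klarreich, Hubbard--Masur, KMS), and also gives the sharper conclusion you note, that any geodesic in $\Gamma_\rho(X,Y)$ for $D$ large lies in the same stratum as $\gamma_g$, not merely the principal one.
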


\begin{proof}
The invariant geodesic $\gamma_g$ projects to a closed geodesic in
moduli space, and so lies in the thick part $K_\epsilon$, for some
$\epsilon$ depending on $g$. If a geodesic $\gamma$ passes through
$B_{\rho}(X)$ and $B_{\rho}(Y)$ for $X, Y \in \gamma_g$ then by the
fellow travelling result, Theorem \ref{theorem:rafi-fellow} it
$B(\epsilon, \rho)$-fellow travels $[X,Y]$.

To derive a contradiction, suppose that there is a sequence $\phi_n$
of geodesic segments in non-principal strata such that the $\phi_n$
fellow travel $\gamma_g$ for distances $d_n$ with $d_n \to \infty$ as
$n \to \infty$. As the cyclic group generated by $g$ acts coarsely
transitively on $\gamma_g$, we may assume that the midpoints of the
$\phi_n$ are all a bounded distance from the basepoint $X$ in
\teichmuller space.  By convergence on compact sets we can pass to a
limiting geodesic $\phi$ which lies in a non-principal strata, as the
principal stratum is open. The geodesics $\phi$ and $\gamma_g$ fellow
travel in the forward direction for all times. By \cite[Theorem
2]{Mas2}, this implies that $\phi$ and $\gamma_g$ have the same
vertical foliation. This is a contradiction since $\phi$ is in a
non-principal stratum.
\end{proof}

We now complete the proof of Theorem \ref{main}. 

\begin{proof}[Proof of Theorem \ref{main}]
We fix a pseudo-Anosov element $g$ in the support of $\mu$ for which
the invariant \teichmuller geodesic $\gamma_g$ is contained in the
principal stratum. Without loss of generality, we fix the basepoint
$X$ to be on $\gamma_g$.

Let $\e > 0$ be sufficiently small such that $\gamma_g$ is contained
in the thick part $K_\e$. Given this $\e$, let $F_0 > 0$ and $0 < e_0
< \tfrac{1}{2}$ be the constants from Proposition
\ref{Dahmani-Horbez}.  Let $\rho > 0$ be the constant in Lemma
\ref{lemma:positive} that ensures $\rho$-fellow travelling for any
length $D> 0$ between $\gamma_w$ and $\gamma_g$ with a positive
probability, depending on $D$.  By Proposition \ref{prop:principal},
there is a $D_0$ such that any \teichmuller geodesic which $(\rho +
F_0)$-fellow travels with $\gamma_g$ distance at least $D_0$ is
contained in the principal stratum.  We shall set $D = D_0 + 2F_0$.

Let $k > 0$ be the smallest positive integer such that
$d_{\T}(g^{-k}X, g^k X) \geqslant D$.  By Theorem
\ref{theorem:rafi-fellow}, any geodesic in $ \Gamma_r(g^{-k} X, g^k
X)$ $\rho$-fellow travels the subsegment $[g^{-k}X, g^k X]$ of
$\gamma_g$. Let $\Omega \subset \Mod(S)^\Z$ consist of those sample
paths $\omega$ such that the sequences $w_{-n} X$ and $w_n X$ converge
to distinct uniquely ergodic foliations $(\lambda^-, \lambda^+) \in
\Gamma_r(g^{-k} X, g^k X)$. Lemma \ref{lemma:positive} implies that
the subset $\Omega$ has positive probability $p > 0$.

Let $\sigma: \Mod(S)^\Z \to \Mod(S)^\Z$ be the shift map.  Ergodicity
of $\sigma$ implies that for almost every $\omega$, there is some $n
\geqslant 0$ such that $\sigma^n(\omega) \in \Omega$. For such $n$,
the subsegment of $\gamma_\omega$ of length $D$, centered at the
closest point on $\gamma_\omega$ to the point $w_n X$, $\rho$-fellow
travels with a translate of $w_n \gamma_g$. In particular, this
implies that $\gamma_\omega$ lies in the principal stratum, giving the
final claim in Theorem \ref{main}.

For almost every $\omega$, the proportion of times $1 \le n \le N$
such that $\sigma^n(\omega) \in \Omega$ tends to $p$ as $N \to
\infty$.  Choose numbers $e_1$ and $e_2$ such that $e_0 < e_1 < e_2 <
\tfrac{1}{2}$, then this also holds for $N$ replaced with either $e_1
N$ or $(1 - e_1)N$. So this implies that the proportion of times $e_1
N \le n \le (1 - e_1)N$ with this property also tends to $p$ as $N \to
\infty$. This implies that given $\omega$, there is an $N_0$ such that
for all $N \ge N_0$, there is an $n$ with $e_1 N \le n \le (1 - e_1)
N$ and $\sigma^n(\omega) \in \Omega$.

Recall that by sublinear tracking in \teichmuller space, due to Tiozzo
\cite{tiozzo}, there is a constant $\ell > 0$ such that for almost all
$\omega$,
\[ \lim_{n \to \infty} \tfrac{1}{n} d_\Teich(w_n X, \gamma_\omega(\ell n)) = 0, \]
where $\gamma_\omega$ is parameterized such that $\gamma_\omega(0)$ is
a closest point on $\gamma_\omega$ to the basepoint. Therefore,
possibly replacing $N_0$ with a larger number, we may also assume that
$d_\Teich(x_N X, \gamma_\omega(\ell N)) \le (e_2 - e_1)N$ for all $N
\ge N_0$.

Choose numbers $\ell_1$ and $\ell_2$, with $\ell_1 < \ell < \ell_2$,
and choose them sufficiently close to $\ell$ so that $e_0 \ell < e_1
\ell_1$ and $(1 - e_1) \ell_2 < (1 - e_0)\ell$. Therefore the geodesic
$[\gamma_\omega(e_2 \ell_1 N - \rho), \gamma_\omega((1 - e_2)\ell_2 N
+ \rho)]$ contains a subsegment of length at least $D$ which
$\rho$-fellow travels with a translate of $\gamma_g$.  By our choice
of $\ell_1$ and $\ell_2$, the geodesic $[\gamma_\omega(e_0 \ell_1 N -
\rho), \gamma_\omega((1 - e_0)\ell_2 N + \rho)]$ is contained in
$[\gamma_\omega(e_2 \ell N), \gamma_\omega((1-e_2)\ell N)]$ for $N$
sufficiently large. Now using Proposition \ref{Dahmani-Horbez}, this
implies that the invariant geodesic $\gamma_{w_n}$ $(\rho+F_0)$-fellow
travels with $\gamma_g$ for a distance at least $D - 2F_0 \ge
D_0$. Then by Proposition \ref{prop:principal}, $\gamma_{w_n}$ is
contained in the principal stratum, as required.
\end{proof}

\section{Fellow travelling in \teichmuller space}\label{section:fellow}

We now provide a direct proof of Lemma \ref{closeness}, relying only
on results from Rafi \cite{Raf}. The first result we shall use is the
fellow travelling result for \teichmuller geodesics with endpoints in
the thick part, Theorem \ref{theorem:rafi-fellow}.  The second result
is a thin triangles theorem for triangles in \teichmuller space, where
one side has a large segment contained in the thick part.

\begin{theorem}\cite[Theorem 8.1]{Raf}\label{theorem:rafi-thin}
For every $\e > 0$, there are constants $C$ and $L$, depending only on
$\e$ and $S$, such that the following holds. Let $X, Y$ and $Z$ be
three points in $\Teich(S)$, and let $[X', Y']$ be a segment of $[X,
Y]$ with $d_{\Teich}(X', Y') > L$, such that $[X', Y']$ is contained in
the $\e$-thick part of $\Teich(S)$. Then, there is a point $W \in [X',
Y']$, such that
\[  \min \{ d_{\Teich}(W, [X, Z]) , d_{\Teich}(W, [Y, Z])  \} \le C.  \]
\end{theorem}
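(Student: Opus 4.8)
The plan is to treat this as a statement about contracting geodesics. The one nontrivial geometric input is that a \teichmuller geodesic segment lying in the $\e$-thick part is uniformly \emph{strongly contracting}: there are constants $b = b(\e,S) > 0$ and $D = D(\e,S) > 0$ such that the nearest-point projection $\pi_g$ to $g := [X',Y']$ is coarsely well defined, and for every geodesic segment $c$ with $d_{\Teich}(c, g) \ge b$ one has $\operatorname{diam}_{\Teich}\big(\pi_g(c)\big) \le D$. This follows from Minsky's product regions theorem, and is also a consequence of the estimates in \cite{Raf}; it is here, in the description of $\Teich(S)$ near the thin part, that the geometry enters. I also record one elementary observation: since $[X,Y] = [X,X'] \cup [X',Y'] \cup [Y',Y]$ is a single geodesic, distances add along it, so the point of $g$ nearest to $X$ is exactly $X'$ and the point of $g$ nearest to $Y$ is exactly $Y'$; that is, $\pi_g(X) = X'$ and $\pi_g(Y) = Y'$.

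Granting this, the argument is a short contradiction. Put $C := b$ and suppose that no point of $g = [X',Y']$ lies within $C$ of $[X,Z] \cup [Y,Z]$; then both geodesic segments $[X,Z]$ and $[Y,Z]$ are at \teichmuller distance at least $C = b$ from $g$. Strong contraction applied to each gives $\operatorname{diam}_{\Teich}\big(\pi_g([X,Z])\big) \le D$ and $\operatorname{diam}_{\Teich}\big(\pi_g([Y,Z])\big) \le D$. Since $X, Z \in [X,Z]$ and $Y, Z \in [Y,Z]$, and using $\pi_g(X) = X'$, $\pi_g(Y) = Y'$, this yields $d_{\Teich}(X', \pi_g(Z)) \le D$ and $d_{\Teich}(\pi_g(Z), Y') \le D$, hence
\[
d_{\Teich}(X', Y') \le 2D .
\]
Taking $L := 2D + 1$ this contradicts $d_{\Teich}(X',Y') > L$. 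Therefore at least one of $[X,Z]$, $[Y,Z]$ meets the $C$-neighbourhood of $g$, so there is a point $W \in [X',Y']$ and a point $V$ on $[X,Z]$ or on $[Y,Z]$ with $d_{\Teich}(W,V) \le C$; then $\min\{ d_{\Teich}(W,[X,Z]), d_{\Teich}(W,[Y,Z]) \} \le C$, which is the assertion.

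The real obstacle is the contraction input: one needs it with constants depending only on $\e$ and $S$ (this uniformity is exactly what makes the theorem uniform), and in the form stated above, valid for an arbitrary geodesic segment avoiding a neighbourhood of $g$ rather than only for segments with endpoints on $g$. An alternative route, closer to the methods used elsewhere in this paper, would project the triangle $X, Y, Z$ to the $\delta$-hyperbolic curve complex, where it is coarsely thin, and combine this with the fact that a long thick segment makes definite progress in $\cC(S)$ to find a long subsegment of $\pi([X',Y'])$ fellow travelling the curve-complex image of one of the two sides $[X,Z]$, $[Y,Z]$; one would then upgrade this to genuine closeness in $\Teich(S)$ via Theorem \ref{theorem:rafi-fellow}. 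That upgrade is delicate, and is essentially equivalent to Lemma \ref{closeness}, so some care would be needed to avoid circularity — which is why I would prefer the contraction argument here.
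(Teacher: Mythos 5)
The paper does not actually prove this statement: it is imported verbatim from Rafi \cite{Raf} as a black box, so any proof you supply is necessarily a different route from the paper's. Your strategy --- deducing the thin--triangle property from strong contraction of thick \teichmuller geodesics --- is a legitimate and standard alternative, and your endgame is correct: since $X, X', Y', Y$ lie in order along one geodesic, the nearest--point projection to $g = [X',Y']$ does send $X$ to $X'$ and $Y$ to $Y'$ exactly; once both sides $[X,Z]$ and $[Y,Z]$ have projections of diameter at most $D$, comparing both with $\pi_g(Z)$ gives $d_\Teich(X',Y') \le 2D$, contradicting $d_\Teich(X',Y') > L = 2D+1$; and the constants $C = b$, $L = 2D+1$ depend only on $\e$ and $S$ provided $b$ and $D$ do.

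The gap is exactly where you flag it, and it is genuine in two respects. First, the attribution: Minsky's product regions theorem describes the metric near the thin part and is not itself a contraction statement; the input you want is Minsky's contraction theorem (from \emph{Quasi-projections in \teichmuller space}), which says that for an $\e$-thick geodesic $g$ there is $b = b(\e, S)$ such that any metric ball disjoint from $g$ projects to a subset of $g$ of diameter at most $b$. Second, and more seriously, the form in which you use contraction --- every geodesic \emph{segment} at distance at least $b$ from $g$ has projection of diameter at most $D$, i.e.\ the bounded geodesic image property --- does not follow from the ball version by the obvious argument: covering a segment of length $T$ by balls of radius comparable to its distance from $g$ only yields a diameter bound growing linearly in $T$. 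The segment version is true and is one of the standard equivalent formulations of strong contraction (it is proved by combining the ball version with the Morse property of $g$, or by showing that $[P,\pi_g(P)] \cup [\pi_g(P),\pi_g(Q)] \cup [\pi_g(Q),Q]$ is a uniform quasigeodesic), but that upgrade is precisely the content-bearing step of your proof, and you neither prove it nor cite it in the form you use. With that lemma supplied, with constants depending only on $\e$ and $S$, your argument is complete; without it, the proof relocates the difficulty rather than resolving it.
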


We now prove Lemma \ref{closeness}.

\begin{proof}
The projection of an $\e_i$-thick \teichmuller geodesic
makes definite progress in the curve complex, i.e. there exist
constants $P_i$ and $Q_i$, depending on $\e_i$ and the surface
$S$, such that for any points $X, Y$ on $\gamma$ we have the estimate
\begin{equation}\label{eq:definite progress} 
d_{\cC}(x, y) \ge P_i d_{\Teich}(X,Y) - Q_i. 
\end{equation}

Set $\e_1 = \e$. Let $L_1$ and $C_1$ be the corresponding constants
from the thin triangle result, Theorem \ref{theorem:rafi-thin}. Let $B_1 = B(
\e_1, C_1 + L_1/2)$ be the constant in the fellow travelling theorem, Theorem \ref{theorem:rafi-fellow}.
Set $\e_2 = \epsilon(\epsilon_1, B_1)$, i.e. the $B_1$-neighbourhood of $K_{\e_1}$ is contained in $K_{\e_2}$. Given this $\e_2$, let $L_2$ and $C_2$ be the corresponding constants
from the thin triangle result, Theorem \ref{theorem:rafi-thin}. Now that all the constants we need are defined, we shall choose $L$ to be the maximum of the following three terms
\begin{eqnarray}\label{L-bound}
&\frac{3}{P_1} \left(M_1C_1 + Q_1 + M_2E + A_2 + A_1 \right) + 
\tfrac{3}{2} L_1, \\
\nonumber &3 L_2 + 3 L_1 + 6 C_1, \\
\nonumber &\frac{3}{P_2} \left(M_1 C_2 + Q_2 + M_2 E + A_2 + A_1 \right) + \tfrac{3}{2}L_1 + 3 B_1.
\end{eqnarray}
Let $Z_1$ be the point that is $1/3$ of the way along $[X, Y]$. Let
$\gamma_1$ be the geodesic segment of $\gamma$ centered at $Z_1$ with
length $L_1$. Similarly, let $Z_2$ be the point that is $2/3$ of the
way along $[X, Y]$. Let $\gamma_2$ be the geodesic segment of $\gamma$
centered at $Z_2$ with length $L_1$. The second term of
\eqref{L-bound} implies that $L > 3L_1$. Figure \ref{figure:thin}
illustrates this setup.

\begin{figure}
\begin{center}
\begin{tikzpicture}[scale=0.9]

\tikzstyle{point}=[circle, draw, fill=black, inner sep=1pt]

\draw (-6,0) node [point,label=below:{$X$}] {} -- 
      (6,0) node [point,label=below:{$Y$}] {}
      node [pos=0.1, label=below:{$\gamma$}] {}
      node (z1) [pos=0.3,point,label=below:{$Z_1$}] {} 
      node (z2) [pos=0.7,point,label=below:{$Z_2$}] {}; 
\draw (-6,5) node [point,label=above:{$X'$}] {} -- 
      (6,5) node [point,label=above:{$Y'$}] {} 
      node [midway,point,label=above:{$Z$}] {}
      node [pos=0.1,label=above:{$\gamma'$}] {};

\draw [very thick] (z1) +(-1.2,0) -- +(1.2,0) node [below left] {$\gamma_1$};
\draw [very thick] (z2) +(-1.2,0) -- +(1.2,0) node [below left] {$\gamma_2$};

\draw (-6,0) -- (-6, 5);
\draw (6,0) -- (6, 5);

\draw (-6,0) .. controls (4,0.5) and (4.5,1) .. (6, 5) 
    node [pos=0.12,fill=black,circle,inner sep=1pt,label=above:{$W_1$}] {}
    node [pos=0.4,fill=black,circle,inner sep=1pt,label=below:{$W_2$}] {};

\draw (8, 2.2) node (a) {$\Teich(S)$};
\draw node (b) [below=4 of a]{$\cC(S)$};
\draw [->] (a) to node [right] {$\pi$} (b);

\draw (-6, -2) node [point,label=above:{$x'$}] {} --
      (6, -2) node [point,label=above:{$y'$}] {};
\draw (-6, -3) node [point,label=below:{$x$}] {} --
      (6, -3) node [point,label=below:{$y$}] {};

\draw (-6,-2) -- (-6, -3) node [midway, right] {$\le E$};
\draw (6,-2) -- (6, -3) node [midway, right] {$\le E$};

\end{tikzpicture}
\end{center}
\caption{Fellow travelling geodesics in $\Teich(S)$.}
\label{figure:thin}
\end{figure}

Applying the thin triangles result, Theorem \ref{theorem:rafi-thin},
to $X, Y$ and $Y'$, there is a point $W_1$ on $[X,Y'] \cup [Y, Y']$
within distance $C_1$ of $\gamma_1$. Similarly, there is a point $W_2$
on $[X,Y'] \cup [Y, Y']$ within distance $C_1$ of $\gamma_2$.

We now show that there is a lower bound on the distance of 
$\gamma_2$ from $[Y, Y']$. In particular, the same is true for the distance of $\gamma_1$, from $[Y, Y']$.

\begin{claim}
The \teichmuller distance from $\gamma_2$ to $[Y,Y']$ is at least
$C_1$.
\end{claim}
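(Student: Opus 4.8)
The plan is to prove the slightly stronger statement that $d_\Teich(w, w'') > C_1$ for \emph{every} point $w$ on $\gamma_2$ and every point $w''$ on $[Y, Y']$, and then take the infimum over such $w, w''$. The underlying picture is that $w$ is far from $y$ as measured in the curve complex --- because $\gamma$ is thick and $w$ lies a definite fraction of the length of $\gamma$ away from the endpoint $Y$ --- whereas every point of $[Y, Y']$ projects into a bounded neighbourhood of $y$ in $\cC(S)$, since $[Y, Y']$ is a Teichm\"uller geodesic whose two endpoints already project within $E$ of each other. Hence $\pi(w)$ and $\pi(w'')$ are far apart in $\cC(S)$, and the coarse Lipschitz estimate \eqref{eq:reducing} forces $w$ and $w''$ to be far apart in $\Teich(S)$.

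Concretely, I would first record two distance estimates. Since $\gamma_2$ is the length-$L_1$ subsegment of $\gamma$ centred at the point $Z_2$ lying two thirds of the way from $X$ to $Y$, every $w \in \gamma_2$ satisfies $d_\Teich(w, Y) \ge \tfrac13 d_\Teich(X, Y) - \tfrac{L_1}{2} \ge \tfrac{L}{3} - \tfrac{L_1}{2}$, which is positive by the second term of \eqref{L-bound}. As $\gamma \subset K_{\e_1}$, the definite-progress estimate \eqref{eq:definite progress} (with $i = 1$), applied to the two points $w$ and $Y$ of $\gamma$, gives
\[
d_\cC(\pi(w), y) \ge P_1\left(\tfrac{L}{3} - \tfrac{L_1}{2}\right) - Q_1.
\]
For the upper bound I would use that $[Y, Y']$ is itself a Teichm\"uller geodesic, so its projection to $\cC(S)$ is an $(M_2, A_2)$-unparameterised quasigeodesic joining $y$ and $y'$; since $d_\cC(y, y') \le E$, the whole of $\pi([Y, Y'])$ stays within a bounded neighbourhood of $y$, of radius the $M_2 E + A_2$ appearing in \eqref{L-bound}, so $d_\cC(\pi(w''), y) \le M_2 E + A_2$.

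Combining these two estimates with the triangle inequality and then \eqref{eq:reducing} applied to the pair $w, w''$ yields
\[
M_1 \, d_\Teich(w, w'') + A_1 > d_\cC(\pi(w), \pi(w'')) \ge P_1\left(\tfrac{L}{3} - \tfrac{L_1}{2}\right) - Q_1 - M_2 E - A_2,
\]
so $d_\Teich(w, w'') > \tfrac{1}{M_1}\bigl(P_1(\tfrac{L}{3} - \tfrac{L_1}{2}) - Q_1 - M_2 E - A_2 - A_1\bigr)$. Since $L$ is at least the first term of \eqref{L-bound}, a direct rearrangement shows the right-hand side is at least $C_1$, which proves the claim. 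The same computation with $Z_1$ in place of $Z_2$ --- where now $d_\Teich(w, Y) \ge \tfrac{2L}{3} - \tfrac{L_1}{2}$, since $Z_1$ is two thirds of the length of $\gamma$ from $Y$ --- gives the corresponding, in fact stronger, bound for the distance of $\gamma_1$ from $[Y, Y']$. The only step that is not pure bookkeeping is the upper bound on $d_\cC(\pi(w''), y)$: it is essential to know that $[Y, Y']$ is a Teichm\"uller geodesic, so that $\pi$ restricted to it is a uniform quasigeodesic rather than merely coarsely Lipschitz, because we have no control on the Teichm\"uller length of $[Y, Y']$ --- only on the curve-complex distance between its endpoints. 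Everything else is absorbed into the choice of $L$ in \eqref{L-bound}.
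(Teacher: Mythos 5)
Your argument is exactly the one in the paper: the same chain of estimates starting from $d_\Teich(\gamma_2, Y)\ge \tfrac13 L - \tfrac12 L_1$, passing through the definite-progress bound \eqref{eq:definite progress}, the quasigeodesicity of $\pi([Y,Y'])$ together with $d_\cC(y,y')\le E$, and finally the coarse Lipschitz estimate \eqref{eq:reducing}, with the choice of $L$ in \eqref{L-bound} closing the gap to $C_1$. The one thing you make more explicit than the paper is \emph{why} $\pi([Y,Y'])$ stays in a bounded neighbourhood of $y$ — namely that $[Y,Y']$ is a Teichm\"uller geodesic so its projection is an unparameterised quasigeodesic with nearby endpoints — which is the right justification and is implicitly what the paper is using.
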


\begin{proof}
The \teichmuller distance of $Y$ from $\gamma_2$ is at least
$\tfrac{1}{3}L - \tfrac{1}{2}L_1$, i.e.
\begin{align*} 
d_\Teich(\gamma_2, Y) & \ge \tfrac{1}{3}L - \tfrac{1}{2}L_1.
\intertext{
As $\e_1$-thick geodesics make definite progress in $\cC(S)$,
\eqref{eq:definite progress}, this implies
}
d_\cC(\pi(\gamma_2), y) & \ge P_1( \tfrac{1}{3}L -
\tfrac{1}{2}L_1) - Q_1.
\intertext{ 
\teichmuller geodesic segments project to $(M_2, A_2)$-quasigeodesics
in $\cC(S)$. Since the endpoints of $\gamma$ and $\gamma'$ are
distance at most $E$ apart in $\cC(S)$, this implies, 
}
d_\cC(\pi(\gamma_2), \pi([Y,Y'])) & \ge P_1( \tfrac{1}{3}L -
\tfrac{1}{2}L_1) - Q_1 - M_2 E - A_2.
\intertext{
As the curve complex distance is a coarse lower bound on the
\teichmuller distance, \eqref{eq:reducing}, this implies
}
d_\Teich(\gamma_2, [Y,Y']) & \ge \frac{1}{M_1}(P_1( \tfrac{1}{3}L -
\tfrac{1}{2}L_1) - Q_1  - M_2 E - A_2 - A_1).
\intertext{
Finally, a comparison with the first term of \eqref{L-bound} shows that
}
d_\Teich(\gamma_2, [Y,Y']) & > C_1,
\end{align*}
as required.
\end{proof}

This implies that $W_2$ lies on $[X, Y']$ and not on $[Y,Y']$. As
$\gamma_1$ is further away from $[Y,Y']$ along $\gamma$ than
$\gamma_2$, the same argument implies that $W_1$ lies on $[X,
Y']$. Furthermore, $d_\Teich(W_1, Z_1) \le  C_1 + L_1/2$. Similarly
$d_\Teich(W_2, Z_2) \le C_1 + L_1/2$.

The segment $[X, Z_2]$ is in the $\e_1$-thick part. The endpoints of
$[X, W_2]$ are within distance $C_1 + L_1/2$ of the endpoints of $[X,
Z_2]$. So by the fellow travelling result, i.e. Theorem
\ref{theorem:rafi-fellow}, $[X, W_2]$ and $[X, Z_2]$ are $B_1$-fellow
travellers, where $B_1 = B(\e_1, C_1 + L_1/2)$. Recall that $B_1$
depends on $\e_1, C_1 + L_1/2$, and the surface $S$.

Recall that $\e_2= \epsilon'(\epsilon_1, B_1)$, i.e. the
$B_1$-neighbourhood of $K_{\e_1}$ is contained in $K_{\e_2}$. Note
that $\e_2$ depends only on the constants $\e = \e_1, B_1$ and the
surface $S$.  In particular, the geodesic $[X, W_2]$ is contained in
the $\e_2$-thick part. Given $\e_2$, recall that $L_2$ and $C_2$ are
the corresponding constants from the thin triangle result, Theorem
\ref{theorem:rafi-thin}.

By the triangle inequality,
\[
d_\Teich(Z_1, W_1) + d_\Teich(W_1, W_2) + d_\Teich(W_2, Z_2) \ge d_\Teich(Z_1, Z_2).
\]
Thus, the \teichmuller distance between $W_1$ and $W_2$ is at least
\[ d_\Teich(W_1, W_2) \ge \frac{1}{3}L - 2C_1 - L_1, \]
The second term of \eqref{L-bound} implies that the right hand side above is at least $L_2$.  So we may apply
the thin triangles result, Theorem \ref{theorem:rafi-thin}, to $X, X'$
and $Y'$ to conclude that there is a point $Z$ on $[X, X'] \cup
[X', Y']$ within distance $C_2$ of $[W_1, W_2]$.

We now show a lower bound for the distance between $[W_1, W_2]$ and
$[X, X']$.

\begin{claim}\label{claim2}
The distance between $[W_1, W_2]$ and $[X, X']$ is at least $C_2$.
\end{claim}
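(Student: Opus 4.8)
The plan is to run exactly the same argument as in the previous claim, but now with $[W_1, W_2]$ playing the role of $\gamma_2$ and $[X, X']$ playing the role of $[Y, Y']$, using the $\e_2$-thick constants in place of the $\e_1$-thick constants. First I would record that $[W_1, W_2]$ is contained in the $\e_2$-thick part: indeed $[W_1, W_2] \subset [X, W_2]$, and we have already shown $[X, W_2]$ is $B_1$-fellow travelled by $[X, Z_2] \subset K_{\e_1}$, so $[X, W_2] \subset K_{\e_2}$ by the definition of $\e_2$. Hence the definite-progress estimate \eqref{eq:definite progress} applies to $[W_1, W_2]$ with the constants $P_2, Q_2$.

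Next I would bound below the \teichmuller distance from $W_1$ to $[X, X']$. Since $d_\Teich(W_1, Z_1) \le C_1 + L_1/2$ and $Z_1$ is $\tfrac{1}{3}$ of the way along $[X, Y]$, we get $d_\Teich(X, W_1) \ge \tfrac{1}{3}L - C_1 - L_1/2$; and since $X' $ is within $B_1$ of $X$ — wait, here one must be slightly careful: $X$ and $X'$ need not be close, but the relevant comparison is of $W_1$ to the \emph{geodesic} $[X, X']$, not to $X'$. A clean way is to use that $x$ and $x'$ are within $E$ in $\cC(S)$, so $\pi([X,X'])$ stays within bounded distance of the pair $\{x, x'\}$ in $\cC(S)$; combined with the fact that $[W_1,W_2]$ makes definite progress, the projection $\pi([W_1,W_2])$ is far from both $x$ and $x'$ once $L$ is large. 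Concretely: $d_\cC(\pi(W_1), x) \ge P_2 d_\Teich(W_1, X) - Q_2 \ge P_2(\tfrac{1}{3}L - C_1 - L_1/2) - Q_2$ minus the $B_1$-slippage coming from replacing $X$ by $W_2$'s geodesic endpoint, then subtract $M_2 E + A_2$ for the quasigeodesic projection of $[X,X']$, then divide by $M_1$ and subtract $A_1$ via \eqref{eq:reducing}. A comparison with the third term of \eqref{L-bound} — which is precisely $\tfrac{3}{P_2}(M_1 C_2 + Q_2 + M_2 E + A_2 + A_1) + \tfrac{3}{2}L_1 + 3B_1$ — then gives $d_\Teich([W_1, W_2], [X, X']) > C_2$.

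The main obstacle, and the only real subtlety, is handling the endpoint $X$ correctly: in the previous claim one could measure distance from $Y$ directly because $Y$ is an endpoint of both $\gamma$ and of $[Y,Y']$, but here $[X,X']$ is a genuine geodesic segment whose far endpoint $X'$ has no a priori control. I expect this is dealt with by the same device already used in the paper — pass to the curve complex, where $x'$ is within $E$ of $x$ by hypothesis, so the whole quasigeodesic $\pi([X,X'])$ is trapped near $x$, and the definite progress of $\pi([W_1,W_2])$ does the rest; the $3B_1$ term in \eqref{L-bound} absorbs the $B_1$-discrepancy between $X$ and the endpoint of $[X,W_2]$. Once the claim is in hand, $Z$ must lie on $[X',Y']$ rather than on $[X,X']$, and then $d_\Teich(Z, [W_1,W_2]) \le C_2$ together with $d_\Teich(W_i, Z_i) \le C_1 + L_1/2$ and $Z_i \in \gamma$ yields a point of $\gamma' = [X',Y']$ within $F := C_2 + C_1 + L_1/2 + \max\{\dots\}$ of $\gamma$, completing Lemma \ref{closeness}.
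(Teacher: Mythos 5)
Your proposal follows the same route as the paper's proof: project to $\cC(S)$, use that $\pi([X,X'])$ is an $(M_2, A_2)$-quasigeodesic whose far endpoint $x'$ is within $E$ of $x$, apply the definite-progress estimate \eqref{eq:definite progress} with $(P_2, Q_2)$ to the $\e_2$-thick geodesic containing $[W_1, W_2]$, pass back to $\Teich(S)$ via \eqref{eq:reducing}, and compare against the third term of \eqref{L-bound}.

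The one muddled point is your account of where $B_1$ enters. You describe a ``$B_1$-slippage coming from replacing $X$ by $W_2$'s geodesic endpoint'' and later ``the $B_1$-discrepancy between $X$ and the endpoint of $[X,W_2]$'' --- but $X$ is itself an endpoint of $[X,W_2]$, so there is no such discrepancy. In the paper's computation, $B_1$ enters because the lower bound on $d_\Teich([W_1,W_2], X)$ is obtained by passing from the closest point $W$ of $[W_1,W_2]$ to its nearest point $V$ on $\gamma$, using $d_\Teich(W,V) \le B_1$ from the $B_1$-fellow travelling of $[X,W_2]$ with $[X,Z_2]$; the $+3B_1$ in \eqref{L-bound} absorbs exactly this term. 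Your alternative observation --- that $X$, $W_1$, $W_2$ are collinear on $[X,Y']$, so the closest point of $[W_1,W_2]$ to $X$ is simply $W_1$, giving $d_\Teich([W_1,W_2],X) = d_\Teich(W_1,X) \ge \tfrac{1}{3}L - C_1 - \tfrac{1}{2}L_1$ --- is actually clean and works equally well, and since $B_1 = B(\e_1, C_1 + L_1/2) \ge C_1$ the comparison with \eqref{L-bound} still closes; but then the $B_1$ plays no role in Claim \ref{claim2} at all, and you should drop the confusing ``slippage'' language rather than invoke it.

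One further small point for the conclusion of Lemma \ref{closeness} (beyond the claim itself): the paper sets $F = C_2 + B_1$, using that every point of $[W_1, W_2]$ --- not merely the endpoints $W_i$ --- lies within $B_1$ of $\gamma$, by the parameterized fellow travelling of $[X,W_2]$ with $[X,Z_2]$. Your proposed $F := C_2 + C_1 + L_1/2 + \max\{\dots\}$, built from the endpoint bounds $d_\Teich(W_i, Z_i) \le C_1 + L_1/2$, controls only the endpoints of $[W_1,W_2]$ and therefore does not yet bound $d_\Teich(Z, \gamma)$ for the interior point $Z$ produced by the thin-triangle theorem.
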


\begin{proof}
Let $W$ be a point of $[W_1, W_2]$ that is closest to $X$. Let $V$ be the point of $\gamma$ that is closest to $W$. Then 
\[
 B_1 \ge d_\Teich(W, V) \hskip 10pt \text{and} \hskip 10pt d_\Teich(X, V) \ge \tfrac{1}{3}L - \tfrac{1}{2} L_1 
\]
Thus, by the triangle inequality
\[
d_\Teich( X, W) \ge d_\Teich (X, V) - d_\Teich(W,V) > \tfrac{1}{3} L - \tfrac{1}{2} L_1 - B_1,
\]
or equivalently%
\begin{align*} 
d_\Teich([W_1, W_2], X) & \ge \tfrac{1}{3}L - \tfrac{1}{2}L_1 - B_1.
\intertext{
As $\e_2$-thick geodesics make definite progress in $\cC(S)$,
\eqref{eq:definite progress} implies
}
d_\cC(\pi([W_1, W_2]), x) & \ge P_2( \tfrac{1}{3}L -
\tfrac{1}{2}L_1 - B_1 ) - Q_2.
\intertext{
As the distance between $x$ and $x'$ in $\cC(S)$ is at most
$E$, this implies,
}
d_\cC(\pi([W_1, W_2]), \pi([X, X'])) & \ge P_2( \tfrac{1}{3}L -
\tfrac{3}{2}L_1 - C_1 ) - Q_2 - M_2E- A_2 .
\intertext{ 
As the curve complex distance is a coarse lower bound on
the \teichmuller metric \eqref{eq:reducing}, this implies 
}
d_\Teich([W_1, W_2], [X,X']) & \ge \frac{1}{M_1}( P_2( \tfrac{1}{3}L -
\tfrac{1}{2}L_1 - B_1 ) - Q_2 - M_2 E - A_2 - A_1).
\intertext{
A comparison with the third term in \eqref{L-bound} then shows that
}
d_\Teich([W_1, W_2], [Y,Y']) & > C_2,
\end{align*}
as required.
\end{proof}

Claim \ref{claim2} implies that $Z$ lies on $[X', Y']$ and not on $[X, X']$. The segments
$[W_1, W_2]$ and
$[Z_1, Z_2]$ are $B_1$-fellow travellers. 
As $Z$
lies within distance $C_2$ of $[W_1, W_2]$, the 
distance of $Z$ from $\gamma$ is at most $C_2 + B_1$. 
To conclude the proof of Lemma \ref{closeness}, we may set $F = C_2 + B_1$,
which depends only on $\e, A$ and the surface $S$, as required.
\end{proof}


\begin{bibdiv}
\begin{biblist}
\bibselect{bibliography}
\end{biblist}
\end{bibdiv}


\end{document}